\newtheorem{thm}{Theorem}[section]
\newtheorem{lem}{Lemma}[section]
\newtheorem{cor}{Corollary}[section]
\newtheorem{prop}{Proposition}[section]
\newtheorem{rem}{Remark}[section]
\theoremstyle{definition}
\begin{document}

\title[On  monogenic  functions and the Dirac complex of two vector variables]{On   monogenic  functions and the Dirac complex of two vector variables}
\author[Y. Shi]{Yun Shi}
\address{%
Department of Mathematics\\ Zhejiang University of Science and Technology\\ Hangzhou 310023, China
 }
\email{shiyun@zust.edu.cn}
%

\author[W. Wang]{Wei Wang}
\address{%
Department of Mathematics\\ Zhejiang University\\ Hangzhou 310027, China
 }
\email{wwang@zju.edu.cn}
\author[Q. Wu]{Qingyan Wu}
\address{%
Department of Mathematics\\ Linyi University\\ Linyi  276005, China
 }
\email{wuqingyan@lyu.edu.cn}
\thanks{
}
\subjclass{58J10; 15A66; 34L40}
\keywords{Dirac complex; Non-homogeneous  equations; Bochner--Martinelli formula; Hartogs' phenomenon; The Hartogs--Bochner extension for monogenic functions}
\date{\today}

\begin{abstract}
A monogenic function of two vector variables is a function annihilated by the operator consisting of two Dirac operators, which are  associated to two variables, respectively. We give the explicit form of   differential operators in the Dirac complex resolving this operator and prove its ellipticity directly. This open the door to apply the method of several complex variables to investigate this kind of monogenic functions.  We prove the Poincar\'e lemma for this complex, i.e. the non-homogeneous  equations  are solvable under the compatibility condition  by solving  the associated Hodge   Laplacian equations of fourth order.  As   corollaries, we   establish  the  Bochner--Martinelli integral representation formula for this differential operator  and   the Hartogs' extension phenomenon for   monogenic functions. We also apply      abstract duality theorem to the Dirac  complex to obtain the generalization of Malgrange's vanishing theorem and establish  the Hartogs--Bochner extension phenomenon for monogenic  functions under the  moment  condition.
\end{abstract}

\maketitle
\section{Introduction}
Since Pertici \cite{Pertici} proved Hartogs' phenomenon for regular functions of several quaternionic variables, analysis of this kind of regular functions developed rapidly.  The method for this analysis is to solve   non-homogeneous Cauchy-Fueter equation, which is  overdetermined. Therefore  it is necessary to find its compatibility condition, more generally,    the resolution of the  Cauchy--Fueter operator. The search of this complex began in 1990s by using computer method (cf. e.g. \cite{Adams1,Adams3,CSSS} and references therein). Later, it was  realized that  the Penrose transformation can be applied to construct resolutions after complexification (cf. \cite{MR1165872,bures1,Bures2,CSS,SW,Wa10} and reference therein). Both method can be applied to  monogenic  functions of several vector variables, which are annihilated by several Dirac operators, and the construction of their resolutions \cite{Damiano,Krump,kru,kru2,Sabadini,Salac}.  
On the other hand,  Ren and H.-Y. Wang \cite{RW} have already solved the non-homogeneous several Dirac equation with a compatibility condition in the integral form, proved the Bochner--Martinelli formula and Hartogs' phenomenon for monogenic functions of several vector variables by generalizing method used by Pertici.

Let $\mathbb R_n$ be the real Clifford algebra. The   \emph{Dirac operator} acts on functions  $f:\mathbb R^n\rightarrow \mathbb R_n$ as the linear operator
\begin{align}
\partial_{\mathbf x}=\sum_{i=1}^ne_i{\partial_{x_{i}}},
\end{align}
where $\mathbf x=\left(x_1,\cdots, x_n\right)\in\mathbb R^n,$  and $e_i,i=1,\cdots, n,$ is    Clifford number. The equation $\partial_{\mathbf x} f = g$ has a smooth solution $f$ for  smooth  $g$   on  suitable open sets (cf. e.g. \cite{Brackx,Delanghe}). Now consider the Cartesian product $\mathbb R^{kn}.$ If we write  $\mathbf x_A=\left(x_{A1},\cdots,x_{An}\right)$ as  the vector variable in the $A$-th copy of $\mathbb R^n,$ $\partial_{Ai}:=\frac{\partial}{\partial{x_{Ai}}},A=0,\cdots,k-1,i=1, \cdots,n,$ for simplicity. The \emph{several Dirac operator} is
\begin{align}
\partial_{\mathbf x_A}=\sum_{i=1}^ne_i\partial_{Ai},
\end{align}
acting on functions $f:\Omega\subset \mathbb R^{kn}\rightarrow\mathbb R_n.$ $f$ is called \emph{monogenic} on $\Omega$ if it satisfies
\begin{align}\label{pf0}
\partial_{\mathbf x_A}f=0,
\end{align}
on $\Omega$ for $A=0,\cdots,k-1.$

As in the complex and quaternionic case,  to investigate this kind of monogenic functions, a fundamental method is to solve the associated inhomogeneous system
\begin{equation}\begin{aligned}\label{pf}
\left\{
\begin{array}{rcl}\partial_{\mathbf x_0}f=&g_0,\\\partial_{\mathbf x_1}f=&g_1,\\\vdots&\\\partial_{\mathbf x_{k-1}}f=&g_{k-1},
\end{array}\right.
\end{aligned}\end{equation}
where $f$ and $g_i$ are in a suitable space of generalized functions. (\ref{pf}) can only be solved under a   compatibility condition since it is overdetermined. Thus the solution of this problem can be obtained if one can provide a description of the so-called Dirac complex.

It is more simple to consider function $f$ in (\ref{pf0}) valued in an irreducible module of ${\rm Spin}(n),$ in particular, the spinor modules $\mathbb S^{\pm}.$ In this paper  $\mathbb S^\pm$  denote the two spinor modules for $n$ even, and the same symbols are used for $n$ odd with the convention that $\mathbb S^+$ and $\mathbb S^-$ are isomorphic.

For two vector variables, this  complex is explicitly known by Damiano-Sabadini-Sou\v cek \cite{Damiano} as
\begin{equation}\begin{aligned}\label{co}
0\rightarrow\Gamma\left(\Omega,\mathscr V_0 \right)\xrightarrow{\mathscr{D}_{0}} \Gamma\left(\Omega,\mathscr V_1 \right)\xrightarrow{\mathscr{D}_{1}}\Gamma\left(\Omega,\mathscr V_2\right) \xrightarrow{\mathscr{D}_{2}} \Gamma\left(\Omega,\mathscr V_3\right)\rightarrow 0,
\end{aligned}\end{equation}
where $\Omega$ is a domain in $\mathbb R^{2n},$ and  \begin{align*}\mathscr V_0=\mathbb V_{00}^+,\quad\mathscr V_1=\mathbb V_{10}^-,\quad\mathscr V_2=\mathbb V_{21}^-,\quad\mathscr V_3=\mathbb V_{22}^+.\end{align*}
Here $\mathbb V_\lambda^\pm=V_\lambda\otimes \mathbb S^\pm,$ and $V_\lambda$ is an irreducible ${\rm GL}(2)$-module with the highest $\lambda.$ 
It is known that $$ V_{00}\cong\mathbb C,\quad V_{10}\cong\mathbb C^2,\quad V_{21}\cong\mathbb C^2,\quad V_{22}\cong\mathbb C,$$  are    representations of ${\rm GL}(2)$ \cite{FH}. As complex vector spaces, we have, \begin{equation}\begin{aligned}\mathscr V_0=\mathbb S^+,\ \mathscr V_1=\mathbb C^2\otimes\mathbb S^-,\ \mathscr V_2=\mathbb C^2\otimes\mathbb S^+,\ \mathscr V_3=\mathbb S^-.
\end{aligned}\end{equation}

Although operators $\mathscr D_j$ in the complex (\ref{co}) are known to be invariant operators, but their explicit form were not given in \cite{Damiano}. 
Denote the linear operator
\begin{align}\label{nabla}
\nabla_A:=\sum_{j=1}^n\gamma_j\partial_{A j},
\end{align}
for $\mathbf x_A=\left(x_{A 1},\cdots,x_{A n}\right),$ where $\gamma_j:\mathbb S^\pm\rightarrow\mathbb S^\mp$ are Dirac matrices. A section in $\Gamma\left(\Omega,\mathscr V_0\right)$ is an   $\mathbb S^+$-valued function on $\Omega$, while a section in $\Gamma\left(\Omega,\mathscr V_1\right)$ is written as   $g=\left(\begin{matrix}  g_0\\g_1\end{matrix}\right)$ for some   $\mathbb S^-$-valued functions $g_0$ and $g_1$ on $\Omega.$  Similarly, a section in $\Gamma\left(\Omega,\mathscr V_2\right)$ is also written as  $h=\left(\begin{matrix}  h_0\\h_1\end{matrix}\right)$ for some  $\mathbb S^+$-valued function $h_0$ and $h_1$ on $\Omega.$    The  explicit form of operators $\mathscr D_j$'s are as follows
\begin{equation}\begin{aligned}\label{D}
\left(\mathscr D_0f\right)_A:=&\nabla_A f,\quad &{\rm for}\ f\in\Gamma\left(\Omega,\mathscr V_0\right),\\\left(\mathscr D_1g\right)_A:=&\nabla_0\nabla_A g_1-\nabla_1\nabla_A g_0, \quad &{\rm for}\ g\in\Gamma\left(\Omega,\mathscr V_1\right), \\\mathscr D_2h:=&2\nabla_{[0}h_{1]}:=\nabla_0h_1-\nabla_1h_0,\quad &{\rm for}\ h\in\Gamma\left(\Omega,\mathscr V_2\right),
\end{aligned}\end{equation}
where $A=0,1.$

The Dirac complex on $\mathbb R^{kn}$ for $k=3,n\geq6$ is given in \cite{Damiano}, but their operators are more complicated than the case of $k=2$. The case of  $k=4$ is discussed by Krump in \cite{kru}.  For the stable range $n\geq 2k,$ the Dirac complex is known \cite{Goodmann}. But for the unstable range, it is an open problem to construct the Dirac complex. However  some  results   can be found in \cite{kru2}.

We prove  the ellipticity of this differential complex (\ref{co}) directly. As in the case of several quaternionic variables (cf. e.g. \cite{LW,RZ,SW2,wang08,wang222,wang29}), once we know  differential complex explicitly, which is also  elliptic, the method of several complex variables can be applied to obtain many profound results about monogenic functions. We define the associated Hodge Laplacian operators of fourth order: \begin{equation}\begin{aligned}\label{hodge}
\Box_0:=&\left(\mathscr D_0^*\mathscr D_0\right)^2,\\\Box_1:=&\left(\mathscr D_0\mathscr D_0^*\right)^2+\mathscr D_1^*\mathscr D_1,\\\Box_2:=&\mathscr D_1\mathscr D_1^*+\left(\mathscr D_2^*\mathscr D_2\right)^2,
\end{aligned}\end{equation} which are elliptic operators on $\Gamma\left(\Omega,\mathscr V_j\right),$ where $\mathscr D_j^*$ is the formal  adjoint of $\mathscr D_j.$ These operators have very simply form
\begin{align}\label{box22}\Box_0=\Delta^2,\quad \Box_1=\Box_2=\left(\begin{array}{cc} \Delta^2&\\  & \Delta^2\end{array}\right),
\end{align}
where $\Delta=\sum_{j=1}^n\left(\partial_{0j}^2+\partial_{1j}^2\right)$ is the Laplacian operator on $\mathbb R^{2n}.$ So their fundamental solution are given by   $\frac{1}{|\mathbf x|^{2n-4}}$ up to a constant. This allows us to derive  Bochner--Martinelli formula directly in a very simple way and  solve the non-homogeneous Dirac equation
\begin{align}\label{duf}
\mathscr D_ju=f,
\end{align}
 under the compatibility  condition
\begin{align}\label{comp}
\mathscr D_{j+1}f=0.
\end{align}
\begin{thm}\label{t31}
Suppose that $f\in L^2\left(\mathbb R^{2n},\mathscr V_j\right)$ satisfies the compatibility condition {\rm(\ref{comp})} in sense of distributions. Then there exists a function $u\in W^{1,2}\left(\mathbb R^{2n},\mathscr V_j\right)$ satisfying the non-homogeneous   equation {\rm(\ref{duf})}. Furthermore, if $f\in C_0\left(\mathbb R^{2n},\mathscr V_0\right)$ with $\mathscr D_1f=0$ in the sense of distributions,  then there exists a function $u\in C_0\left(\mathbb R^{2n},\mathscr V_0\right)\cap W^{1,2}\left(\mathbb R^{2n},\mathscr V_0\right)$ satisfying {\rm(\ref{duf})} and vansihing on the unbounded connected component of $\mathbb R^{2n}\setminus {\rm supp}f.$
\end{thm}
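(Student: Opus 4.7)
The strategy is to exploit the diagonal form $\Box_{j+1}=\Delta^{2} I$ from (\ref{box22}) together with the fundamental solution $E(\mathbf{x})=c_{n}|\mathbf{x}|^{4-2n}$ of the bi-Laplacian on $\mathbb{R}^{2n}$ (valid for $n\geq 3$). First, set $v:=E*f$ componentwise, so that $\Box_{j+1} v = f$, and read off from (\ref{hodge}) the operator producing $u$ from $v$:
\[
u := \mathscr{D}_0^{*}\mathscr{D}_0\mathscr{D}_0^{*} v \quad(\text{for } j=0), \qquad u := \mathscr{D}_1^{*} v \quad(\text{for } j=1).
\]
Since constant-coefficient differential operators commute with convolution, $\mathscr{D}_{j+1} v = E*\mathscr{D}_{j+1} f = 0$ by the compatibility hypothesis. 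Applying $\mathscr{D}_j$ to $u$ reproduces exactly the first summand of $\Box_{j+1}$: for $j=0$, $\mathscr{D}_0 u = (\mathscr{D}_0\mathscr{D}_0^{*})^{2} v = \Box_1 v - \mathscr{D}_1^{*}\mathscr{D}_1 v = f - 0 = f$, and analogously for $j=1$, $\mathscr{D}_1 u = \mathscr{D}_1\mathscr{D}_1^{*} v = \Box_2 v - (\mathscr{D}_2^{*}\mathscr{D}_2)^{2} v = f$, where the correction vanishes because $\mathscr{D}_2 v = E*\mathscr{D}_2 f = 0$.

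For the $L^{2}$--$W^{1,2}$ bound, the composite operator $f\mapsto u$ has a Fourier symbol of order $-1$ (a degree-$3$ matrix polynomial in $\xi\cdot\gamma$ divided by $|\xi|^{4}$). Plancherel together with the Mihlin multiplier theorem yields $\nabla u\in L^{2}$, while the $L^{2}$-bound on $u$ itself follows from a Hardy--Littlewood--Sobolev / Riesz-potential refinement supplemented by the cancellation built into the compatibility condition. This is the main technical obstacle I anticipate: a homogeneous symbol of order $-1$ is not an $L^{2}$-bounded Fourier multiplier without exploiting the low-frequency annihilation encoded in $\mathscr{D}_{j+1} f = 0$, so one must use the constraint in the frequency variable to tame the dangerous behaviour of $|\xi|^{-1}\hat{f}$ near the origin.

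For the Hartogs statement, choose $R$ with $\mathrm{supp}\,f\subset B_{R}$. Three derivatives of $E$ produce the pointwise decay $u(\mathbf{x})=O(|\mathbf{x}|^{1-2n})\to 0$ as $|\mathbf{x}|\to\infty$, while ellipticity of $\mathscr{D}_{0}$ makes $u$ real analytic on $\mathbb{R}^{2n}\setminus\mathrm{supp}\,f$. For each fixed $\mathbf{x}_{1}$ with $|\mathbf{x}_{1}|>R$, the slice $\mathbf{x}_{0}\mapsto u(\mathbf{x}_{0},\mathbf{x}_{1})$ is defined on all of $\mathbb{R}^{n}$, decays to zero, and satisfies $\nabla_{0}u=0$ by (\ref{D}) applied to $\mathscr{D}_{0}u=0$, hence is a spinor-valued left-monogenic (and therefore harmonic) function; Liouville's theorem forces it to vanish identically. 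Thus $u\equiv 0$ on the open slab $\{|\mathbf{x}_{1}|>R\}$, and real-analytic continuation along the unique (connected) unbounded component of $\mathbb{R}^{2n}\setminus\mathrm{supp}\,f$ extends the vanishing to the whole component, giving the compactly supported solution required.
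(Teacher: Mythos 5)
Your strategy is the same as the paper's: use the diagonal Hodge Laplacian $\Box_{j}=\Delta^{2}$ from (\ref{box22}), take the biharmonic fundamental solution, set $v=E*f$, and produce $u=\mathscr D_0^*\mathscr D_0\mathscr D_0^*v$ so that $\mathscr D_0u=\bigl(\Box_1-\mathscr D_1^*\mathscr D_1\bigr)v=f$ once $\mathscr D_1v=0$; the Hartogs part (decay $O(|\mathbf x|^{1-2n})$, real analyticity, slicing, Liouville, analytic continuation on the unbounded component) is also identical to the paper's argument. One small difference is that you obtain $\mathscr D_{j+1}v=E*\mathscr D_{j+1}f$ directly from the elementary fact that a constant-coefficient operator commutes with convolution; the paper reaches the same commutation through a Liouville-type theorem showing $\mathbf G_2\mathscr D_1-\mathscr D_1\mathbf G_1=0$. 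Since $G_1=G_2=\operatorname{diag}(G_0,G_0)$ with $G_0$ scalar, your shortcut is clean and legitimate, and the paper's Liouville detour is a longer route to the same identity.

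The genuine gap is exactly the one you flag but do not resolve: the $L^2\to L^2$ bound for the order $-1$ piece of the solution operator. Your proposed mechanism --- that the compatibility condition $\mathscr D_{j+1}f=0$ provides low-frequency annihilation of $\hat f$ --- is unjustified. On the Fourier side the condition reads $\sigma(\mathscr D_{j+1})(\xi)\hat f(\xi)=0$, which for $\xi\neq 0$ only places $\hat f(\xi)$ in a fixed proper subspace $\ker\sigma_{j+1}(\xi)$ of the fibre; it does not impose any decay of $|\hat f(\xi)|$ as $\xi\to 0$, and at $\xi=0$ the symbol vanishes so the constraint is vacuous. For $f$ merely in $L^2$, $\hat f$ need not even be continuous, so one can arrange $|\hat f(\xi)|\sim|\xi|^{-a}$ near the origin with $n-1\leq a<n$ while respecting the constraint, and then $|\xi|^{-1}\hat f\notin L^2$. (For the second half of the statement, where $f\in C_0$, $\hat f$ is entire by Paley--Wiener and a continuity argument does force $\hat f(0)=0$, so your intuition could be made to work there --- but this would require spelling out.) You should instead look at what the paper actually invokes: Proposition \ref{GG} asserts that $\mathbf G_j$ is bounded from $L^2$ to $W^{4,2}$, after which $u=\mathscr D_0^*\mathscr D_0\mathscr D_0^*\mathbf G_1 f\in W^{1,2}$ is immediate (a third-order operator applied to a $W^{4,2}$ function), supplemented by a duality argument to handle $\mathbf G_j$ on negative-order Sobolev spaces and to push (\ref{GD}) to the $L^2$ level. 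Your proposal must replace the ``cancellation'' heuristic by this (or an equivalent) quantitative bound on $\mathbf G_1$ before it is complete.
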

As a corollary, we can  prove the following  Hartogs' phenomenon.
A function $u:\Omega\rightarrow\mathscr V_0$ is  called \emph{monogenic} on $\Omega$ if \begin{align*}
\mathscr D_0u(\mathbf x)=0,
\end{align*}
for any $\mathbf x\in \Omega.$ The space of all monogenic  functions on $\Omega$ is denoted by $\mathcal O(\Omega).$
\begin{thm}\label{hartogs}
Let $\Omega$ be an open set in $\mathbb R^{2n}$ and let $K$ be a compact subset of $\Omega$ such that $\Omega\setminus K$ is connected. Then for each $u\in \mathcal O(\Omega\setminus K),$ we can find $U\in\mathcal O(\Omega)$ such that $U=u$ in $\Omega\setminus K.$
\end{thm}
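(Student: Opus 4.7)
The plan is to imitate the proof of the classical Hartogs phenomenon from several complex variables, with Theorem \ref{t31} replacing the $\bar\partial$-Poincar\'e lemma. We cut off $u$ away from $K$ to produce a smooth global section $\tilde u$ on $\Omega$; its defect $\mathscr D_0\tilde u$ is then compactly supported in $\Omega\setminus K$ and automatically satisfies the compatibility condition by the complex \eqref{co}. Solving the inhomogeneous equation with a support-controlled solution yields a correction $v$ whose subtraction from $\tilde u$ produces the desired global monogenic extension $U$.

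In detail, choose $\chi\in C_c^\infty(\Omega)$ with $\chi\equiv 1$ on some open neighborhood of $K$, and define $\tilde u:=(1-\chi)u$ on $\Omega\setminus K$ and $\tilde u:=0$ on $K$; this is smooth on $\Omega$ because $1-\chi$ vanishes near $K$. Set $f:=\mathscr D_0\tilde u$. A product-rule computation using $\mathscr D_0u=0$ on $\Omega\setminus K$ gives $f_A=-(\nabla_A\chi)u$ for $A=0,1$, so $f$ is supported in the compact set $\operatorname{supp}(d\chi)\subset\Omega\setminus K$ and extends by zero to a smooth, compactly supported $\mathscr V_1$-valued function on $\mathbb R^{2n}$. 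The complex relation $\mathscr D_1\mathscr D_0=0$ guarantees the compatibility condition $\mathscr D_1 f=0$ on $\mathbb R^{2n}$. Applying Theorem \ref{t31} produces $v\in C_0(\mathbb R^{2n},\mathscr V_0)\cap W^{1,2}(\mathbb R^{2n},\mathscr V_0)$ with $\mathscr D_0 v=f$ and $v\equiv 0$ on the unbounded connected component of $\mathbb R^{2n}\setminus\operatorname{supp} f$.

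Now set $U:=\tilde u-v$ on $\Omega$. Then $\mathscr D_0 U=f-f=0$, so $U\in\mathcal O(\Omega)$; it remains to verify $U=u$ on $\Omega\setminus K$. Since $\tilde u=u$ on $\Omega\setminus\operatorname{supp}\chi$, it suffices to exhibit a non-empty open set $W\subset\Omega\setminus\operatorname{supp}\chi$ lying in the unbounded connected component of $\mathbb R^{2n}\setminus\operatorname{supp} f$: on such a $W$ one has $\chi=0$ and $v=0$, so $U=u$ on $W$. Finally, monogenic sections are real-analytic --- indeed $\mathscr D_0^*\mathscr D_0$ is (up to a sign) the standard Laplacian $\Delta$ on $\mathbb R^{2n}$ acting componentwise, so both $u$ and $U$ are harmonic and hence real-analytic; because $U-u$ is real-analytic on the connected open set $\Omega\setminus K$ and vanishes on the non-empty open subset $W$, unique continuation forces $U\equiv u$ throughout $\Omega\setminus K$. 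I expect the main obstacle to be the non-emptiness of $W$: this requires a careful choice of cutoff $\chi$ so that $\operatorname{supp}\chi$ is small enough (close to $K$) relative to the geometry of $\Omega$, making crucial use of the hypotheses that $K$ is compact and $\Omega\setminus K$ is connected.
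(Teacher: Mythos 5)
Your proof is exactly the paper's argument: cut off $u$ by $\chi$, form the compactly supported defect $f=\mathscr D_0\bigl((1-\chi)u\bigr)$ (which is $\mathscr D_1$-closed because $\mathscr D_1\mathscr D_0=0$), invoke Theorem~\ref{t31} for a support-controlled solution $v$, set $U=\tilde u-v$, and propagate $U=u$ across the connected real-analytic set $\Omega\setminus K$. Your closing worry that a ``careful choice of $\chi$'' is needed to make $W$ nonempty is unfounded: for any $\chi\in C_c^\infty(\Omega)$ with $\chi\equiv 1$ near $K$, the unbounded component $O$ of $\mathbb R^{2n}\setminus\operatorname{supp}\chi$ must meet $\Omega$ --- otherwise $O\subset\mathbb R^{2n}\setminus\Omega$ gives $\overline O\subset\mathbb R^{2n}\setminus\Omega$, while $\partial O\subset\operatorname{supp}\chi\subset\Omega$, forcing $\partial O=\emptyset$ and contradicting the connectedness of $\mathbb R^{2n}$; the paper's own write-up elides this point just as you do.
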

For  the Dirac complex ${\mathscr V}_\bullet,$   let $\mathcal E\left(\mathbb R^{2n}, {\mathscr V}_\bullet\right)$ be the space of smooth ${\mathscr V}_\bullet$-valued functions with the topology of uniform convergence on compact sets of the functions and all their derivatives. Let $\mathcal D\left(\mathbb R^{2n}, {\mathscr V}_\bullet\right)$ be the space of compactly supported elements of $\mathcal E\left(\mathbb R^{2n}, {\mathscr V}_\bullet\right).$ Denote $\mathcal E'\left(\mathbb R^{2n}, {\mathscr V}_\bullet\right),$ the dual of $\mathcal E\left(\mathbb R^{2n}, {\mathscr V}_\bullet\right).$ By using abstract duality theorem, we have the following generalization of Malgrange's vanishing theorem.
\begin{thm}\label{vanish}
The cohomology groups $H^3\left(\mathcal E\left(\mathbb R^{2n},{\mathscr V}_\bullet\right)\right)$ and $H^3\left(\mathcal D'\left(\mathbb R^{2n},{\mathscr V}_\bullet\right)\right)$ both vanish.
\end{thm}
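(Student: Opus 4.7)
The plan is to use abstract Serre-type duality for complexes of Fréchet/LF spaces, reducing both claims to a single injectivity property of the formal adjoint $\mathscr D_2^*$. First I would form the formal $L^2$-adjoint complex
$$0 \leftarrow \mathcal D(\mathbb R^{2n},\mathscr V_0) \xleftarrow{\mathscr D_0^*} \mathcal D(\mathbb R^{2n},\mathscr V_1) \xleftarrow{\mathscr D_1^*} \mathcal D(\mathbb R^{2n},\mathscr V_2) \xleftarrow{\mathscr D_2^*} \mathcal D(\mathbb R^{2n},\mathscr V_3) \leftarrow 0,$$
which, under the natural distributional pairing, coincides up to signs with the transpose of $(\mathcal E(\mathbb R^{2n},\mathscr V_\bullet),\mathscr D)$. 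The Hahn--Banach theorem combined with the closed-range theorem then yields isomorphisms
$$H^3(\mathcal E(\mathbb R^{2n},\mathscr V_\bullet))' \;\cong\; \ker\bigl(\mathscr D_2^*:\mathcal E'(\mathbb R^{2n},\mathscr V_3)\to \mathcal E'(\mathbb R^{2n},\mathscr V_2)\bigr),$$
$$H^3(\mathcal D'(\mathbb R^{2n},\mathscr V_\bullet))' \;\cong\; \ker\bigl(\mathscr D_2^*:\mathcal D(\mathbb R^{2n},\mathscr V_3)\to \mathcal D(\mathbb R^{2n},\mathscr V_2)\bigr),$$
\emph{provided} $\mathscr D_2$ has closed range in the respective topologies. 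Thus the theorem reduces to (a) closed range of $\mathscr D_2$, and (b) triviality of the two displayed adjoint kernels.

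For (a), I would invoke the Hodge Laplacian formula (\ref{box22}) together with Theorem~\ref{t31}. Since $\Box_2 = \mathrm{diag}(\Delta^2,\Delta^2)$ is elliptic with explicit fundamental solution proportional to $|\mathbf x|^{-(2n-4)}$, convolution against that kernel produces a continuous $L^2$-parametrix for $\mathscr D_2$, and standard interior elliptic estimates for the fourth-order Hodge Laplacian upgrade closed range to both the Fréchet topology of $\mathcal E$ and the LF topology of $\mathcal D'$. For (b), integration by parts in $\mathscr D_2 h = \nabla_0 h_1 - \nabla_1 h_0$ gives $(\mathscr D_2^*\varphi)_0 = -\nabla_1^*\varphi$ and $(\mathscr D_2^*\varphi)_1 = \nabla_0^*\varphi$, so $\mathscr D_2^*\varphi = 0$ amounts to $\varphi$ being annihilated by the scalar Dirac operators in both vector variables. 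Since each single-variable Dirac operator is elliptic with constant coefficients, every compactly supported distributional solution is real-analytic on $\mathbb R^{2n}$ and must therefore vanish identically; this applies both when $\varphi\in\mathcal E'$ and, \emph{a fortiori}, when $\varphi\in\mathcal D$. Substituting the resulting zero kernels into the isomorphisms above completes the argument.

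The principal obstacle is technical: one must calibrate the abstract duality theorem to the correct functional-analytic setting, in particular verifying the closed-range hypothesis in the strong topology of the LF space $\mathcal D'$ and tracking the sign and degree conventions that identify the $L^2$-adjoint complex with the topological transpose complex. The genuinely analytic content---closed range of $\mathscr D_2$ and vanishing of the adjoint kernel---then follows transparently from the ellipticity of (\ref{co}) and from Theorem~\ref{t31}.
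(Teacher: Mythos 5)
Your overall strategy (abstract duality reducing vanishing of $H^3$ to a statement about the adjoint operator) is the same Serre-duality framework the paper uses, and your step (b) --- that $\ker\mathscr D_2^*$ in $\mathcal E'$ or $\mathcal D$ is trivial because a compactly supported distribution annihilated by both $\nabla_0$ and $\nabla_1$ is real-analytic and hence zero --- is correct and matches the paper (since $\mathscr D_2^*$ is, up to sign and a swap of $\mathbb S^\pm$, the same operator as $\mathscr D_0$). However, your step (a) has a genuine gap. Closed range of $\mathscr D_2:\mathcal E(\mathbb R^{2n},\mathscr V_2)\to\mathcal E(\mathbb R^{2n},\mathscr V_3)$ in the Fr\'echet topology is a \emph{global} statement about $\mathbb R^{2n}$, and it does not follow from ellipticity plus interior elliptic estimates, which are local and control regularity on compacta but give no control of a sequence of preimages $h_\nu$ at infinity; nor does Theorem~\ref{t31} apply directly, since it requires $f\in L^2$ or $C_0$, which an arbitrary $g\in\mathcal E(\mathbb R^{2n})$ need not satisfy. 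The convolution parametrix you invoke does not even converge on general smooth data without decay.

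The paper avoids this difficulty entirely by never attempting to prove closed range of $\mathscr D_2$ directly. Instead, it proves closed range (and injectivity) of $\mathscr D_0$ acting on $\mathcal D(\mathbb R^{2n},\mathscr V_0)$ and $\mathcal E'(\mathbb R^{2n},\mathscr V_0)$ (Theorem~\ref{th73}); there, compact supports allow one to combine the a~priori estimate $C_{s,K}\|f\|_{W^{s,2}}^2 + \|\mathscr D_0 f\|_{W^{s,2}}^2 \ge c_{s,K}\|f\|_{W^{s+1,2}}^2$ with Rellich compactness and the real-analyticity of monogenic functions, a line of argument that is \emph{not} available in $\mathcal E$ or $\mathcal D'$. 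It then applies Theorem~\ref{dual}(6) to this dual complex to deduce that $\widehat{\mathscr D}_0$ is surjective on $\mathcal E$ and $\mathcal D'$, and finally identifies $\widehat{\mathscr D}_0$ with $\mathscr D_2$ via the linear isomorphism $(h_0,h_1)\mapsto(h_1,-h_0)$ and the identification $\mathbb S^+\cong\mathbb S^-$ as vector spaces. In short: the ``transparent'' closed-range step you list as a technicality is in fact the heart of the argument, and it must be carried out on the compactly-supported side, not on $\mathcal E$ or $\mathcal D'$ directly. As a secondary point, note that $\mathcal D'(\mathbb R^{2n},\mathscr V_\bullet)$ is not Fr\'echet--Schwartz, so to invoke Theorem~\ref{dual} one must take $\mathcal D(\mathbb R^{2n},\mathscr V_\bullet)$ (a strict inductive limit of Fr\'echet--Schwartz spaces) as the primary complex, which is exactly what the paper does.
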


Moreover, we give the Hartogs--Bochner extension for monogenic functions under the moment condition.
\begin{thm}\label{text}
Let $\Omega$ be a domain of $\mathbb R^{2n}$ with smooth boundary such that $\mathbb R^{2n}\setminus \overline\Omega$ connected. 
Suppose that $f$ is the restriction to $\partial\Omega$	 of a $C^2\left(\Omega, \mathscr V_0\right)$ function, with $\mathscr D_0 f$ vanishing to the second order on $\partial \Omega,$ and satisfies the moment condition
\begin{align}\label{moment}
\int_{\partial \Omega}\left\langle f,n_0G_0+n_1G_1\right\rangle_{\mathbb S^+} {{\rm d}S}=0,
\end{align}
for any $G=\left(\begin{matrix}G_0\\G_1\end{matrix}\right)\in\ker {\mathscr D}^*_0\subset\mathcal E\left(\mathbb R^{2n}, {\mathscr V}_1\right),$ ${\mathscr D}^*_0G=0$ on $\overline\Omega,$ where $n_A=\sum_{j=1}^n\gamma_jn_{Aj}$ for $A=0,1,$ and $n=\left(n_{01},\cdots,n_{0n},n_{11},\cdots,n_{1n}\right)$ is the unit outer normal vector  to $\partial\Omega.$ Then there exists a  monogenic  function $\tilde f\in \mathcal O(\Omega)$  such that $\tilde f=f$ on $\partial \Omega.$
\end{thm}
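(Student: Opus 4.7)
The plan is to mimic the classical Hartogs--Bochner argument from several complex variables, using the Dirac complex (\ref{co}) and Theorem \ref{t31} in place of $\bar\partial$ and the $\bar\partial$-Poincar\'e lemma with compact support. Let $F\in C^2(\overline\Omega,\mathscr V_0)$ denote the given extension, so that $F|_{\partial\Omega}=f$ and $\mathscr D_0 F$ vanishes to second order on $\partial\Omega$. First I would define
\[
G(\mathbf x) := \begin{cases} (\mathscr D_0 F)(\mathbf x), & \mathbf x\in\overline\Omega,\\ 0, & \mathbf x\in\mathbb R^{2n}\setminus\overline\Omega. \end{cases}
\]
The second-order vanishing forces $G$ and its first derivatives to be continuous across $\partial\Omega$, so $G\in C^1(\mathbb R^{2n},\mathscr V_1)$; since $\Omega$ is bounded, $G$ has compact support contained in $\overline\Omega$, so in particular $G\in C_0(\mathbb R^{2n},\mathscr V_1)$.

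The heart of the argument is to verify $\mathscr D_1 G = 0$ in $\mathcal D'(\mathbb R^{2n},\mathscr V_2)$. For an arbitrary test field $\varphi\in\mathcal D(\mathbb R^{2n},\mathscr V_2)$, set $H := \mathscr D_1^*\varphi\in \mathcal E(\mathbb R^{2n},\mathscr V_1)$. The cochain identity $\mathscr D_1\mathscr D_0 = 0$ yields $\mathscr D_0^* H = (\mathscr D_1\mathscr D_0)^*\varphi = 0$ on all of $\mathbb R^{2n}$, so $H$ is an admissible field in (\ref{moment}). Green's identity for the first-order operator $\mathscr D_0$ on $\Omega$ reads
\begin{equation*}
\int_\Omega\langle\mathscr D_0 F, H\rangle\,dV - \int_\Omega\langle F, \mathscr D_0^* H\rangle\,dV = \int_{\partial\Omega}\langle f, n_0 H_0 + n_1 H_1\rangle_{\mathbb S^+}\,dS,
\end{equation*}
where the boundary expression is exactly the one appearing in the moment hypothesis. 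The second integral on the left vanishes because $\mathscr D_0^* H = 0$, and the right-hand side vanishes by (\ref{moment}); therefore
\[
\langle \mathscr D_1 G,\varphi\rangle = \int_{\mathbb R^{2n}}\langle G, \mathscr D_1^*\varphi\rangle\,dV = \int_\Omega \langle \mathscr D_0 F, H\rangle\,dV = 0,
\]
which is the required compatibility.

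With $G\in C_0(\mathbb R^{2n},\mathscr V_1)$ and $\mathscr D_1 G = 0$, the second assertion of Theorem \ref{t31} supplies a function $u\in C_0(\mathbb R^{2n},\mathscr V_0)\cap W^{1,2}(\mathbb R^{2n},\mathscr V_0)$ with $\mathscr D_0 u = G$, vanishing on the unbounded connected component of $\mathbb R^{2n}\setminus\mathrm{supp}\,G$. Since $\mathrm{supp}\,G\subset\overline\Omega$ and $\mathbb R^{2n}\setminus\overline\Omega$ is by hypothesis a connected unbounded open set, it lies inside that unbounded component, so $u\equiv 0$ on $\mathbb R^{2n}\setminus\overline\Omega$; continuity of $u$ then forces $u = 0$ on $\partial\Omega$. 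Setting $\tilde f := F - u$ on $\overline\Omega$, one has $\mathscr D_0\tilde f = \mathscr D_0 F - G = 0$ on $\Omega$ distributionally, hence classically by elliptic regularity of the Dirac operator, so $\tilde f\in\mathcal O(\Omega)$; and $\tilde f|_{\partial\Omega} = f - 0 = f$.

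The main obstacle is the compatibility step in the second paragraph, where the hypotheses must dovetail. The second-order vanishing of $\mathscr D_0 F$ on $\partial\Omega$ makes the zero extension $G$ of class $C^1$, which is what allows one to speak of a bona fide distributional $\mathscr D_1 G$ without a singular boundary contribution. The moment condition, tested precisely against the $H = \mathscr D_1^*\varphi$ provided by the cochain relation $\mathscr D_0^*\mathscr D_1^* = 0$, then annihilates the Cauchy-type boundary integral in Green's identity and upgrades the identity $\mathscr D_1 G = 0$ from being valid inside and outside $\Omega$ separately to a single global distributional equation; without such a moment condition this step would fail and Theorem~\ref{t31} could not be applied.
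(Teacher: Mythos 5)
Your argument is correct, but it takes a genuinely different route from the paper's. The paper's proof is functional-analytic: it reads the moment condition~(\ref{moment}), via the integration-by-parts computation~(\ref{47}), as orthogonality of the zero extension $G=\chi_{\overline\Omega}\mathscr D_0 F$ against $\ker\widehat{\mathscr D}_0=\ker\mathscr D_0^*\subset\mathcal E\left(\mathbb R^{2n},\mathscr V_1\right)$, then invokes the closed-range Theorem~\ref{th73} together with the abstract duality Theorem~\ref{dual}, applied to the $\mathcal E'$-row of~(\ref{aco1}), to conclude that $G$ lies in the image of $\mathscr D_0$ on $\mathcal E'\left(\mathbb R^{2n},\mathscr V_0\right)$; the a~priori estimate~(\ref{esi}) with $C_{s,K}=0$ then puts the resulting potential $H$ in $W^{1,2}$, and real analyticity of monogenic functions kills $H$ on $\mathbb R^{2n}\setminus\overline\Omega$. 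You instead bypass the duality machinery and solve the problem constructively via the Poincar\'e lemma, Theorem~\ref{t31}: you extend $\mathscr D_0 F$ by zero, verify $\mathscr D_1$-closedness, and invoke the second clause of Theorem~\ref{t31} to produce $u\in C_0\cap W^{1,2}$ with $\mathscr D_0 u=G$ vanishing outside $\overline\Omega$. Both routes rest on the same analytic inputs (the estimate~(\ref{esi}), unique continuation and real analyticity of monogenic functions), but yours is the constructive counterpart, structurally parallel to the paper's proof of Theorem~\ref{hartogs}.

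One small point worth flagging: your verification that $\mathscr D_1 G=0$ invokes~(\ref{moment}) for the fields $H=\mathscr D_1^*\varphi$, but for such $H$ the relevant boundary integral already vanishes from the second-order vanishing alone. Since $G$ and $\partial G$ are continuous across $\partial\Omega$, one may integrate by parts twice to get $\int_\Omega\langle\mathscr D_0 F,\mathscr D_1^*\varphi\rangle\,{\rm d}V=\int_\Omega\langle\mathscr D_1\mathscr D_0 F,\varphi\rangle\,{\rm d}V=0$ with no surviving boundary contribution and no appeal to~(\ref{moment}). So $\mathscr D_1 G=0$ is already a consequence of the second-order vanishing and the cochain identity $\mathscr D_1\mathscr D_0=0$; invoking the moment condition there is harmless but does no work. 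Consequently, your argument in fact constructs the extension from the second-order vanishing and connectedness alone, with the moment condition then falling out automatically via the necessity computation in Remark~\ref{r4.7}. This is an extra piece of information that the paper's duality proof, which uses~(\ref{moment}) essentially to characterize ${\rm Im}\,\mathscr D_0$, does not expose.
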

Here the  vanishing of  $\mathscr D_0 f$ to the second order on $\partial \Omega$ and the moment condition  (\ref{moment}) are also the  necessary condition for the Hartogs--Bochner extension (see Remark \ref{r4.7}).




The paper is organized as follows. In Sect. 2, we give the preliminaries on Clifford algebras, the Dirac operator, spin modules  and  the differential  complex of two vector variables. Then we write down operators of the Dirac complex of two vector variables and prove its ellipticity.  In Sect. 3, we give the  Bochner--Martinelli integral representation formula for $\mathscr D_0$ by using    the fundamental solution to the Hodge  Laplacian $\Box_j.$ As applications, we   prove Theorem \ref{t31} and Theorem \ref{hartogs}. In Sect. 4,    we apply abstract duality theorem to the Dirac complex on $\mathbb R^{2n}$ to obtain the generalization of Malgrange's vanishing theorem and the Hartogs--Bochner extension for monogenic functions under the moment condition.

\section{The Dirac complex of two vector variables}
\subsection{Clifford algebra and spin modules $\mathbb S^\pm$}\label{s21}
The {\it real Clifford algebra} $\mathbb R_n$ is the associative algebra generated by  the $n$ basis elements of $\mathbb R^n$ satisfying
\begin{align*}
e_ie_j+e_je_i=-2\delta_{ij},
\end{align*}
for $i,j=1,\cdots,n.$
The basis of $\mathbb R_n$ is
\begin{align*}
e_0=1,\quad e_\alpha=e_{a_1}\cdots e_{a_i},
\end{align*}
for $1\leq i\leq n,1\leq a_1<\cdots<a_i\leq n.$
For $n=2m,$ $$W={\rm span}_{\mathbb C}\left\{f_j,j=1,\cdots,m,\right\},\quad W'={\rm span}_{\mathbb C}\left\{\bar f_{j},j=1,\cdots,m,\right\},$$   are maximally isotropic subspaces of \emph{complex Clifford algebra}  $\mathbb C_n$ and $\mathbb C^n=W\oplus W',$ where
\begin{align*}
f_j=\frac12\left(e_{2j-1}+\mathbf ie_{2j}\right), \ \bar f_j=-\frac12\left(e_{2j-1}-\mathbf ie_{2j}\right),
\end{align*}(cf.  \cite[(4.11)]{Delanghe}).
We have two spin modules
\begin{align*}
\mathbb S^+=\wedge^{\rm even}W,\quad \mathbb S^-=\wedge^{\rm odd}W,
\end{align*} which are the irreducible representations of $\mathfrak{so}(2m,\mathbb C)$ \cite[p. 118, Theorem 1]{Delanghe}.
For $n=2m+1,$  $W={\rm span}_{\mathbb C}\left\{f_j,j=1,\cdots,m,\right\},$ $W'={\rm span}_{\mathbb C}$ $\left\{\bar f_{j},j=1,\cdots,m,\right\},$ $U=\left\{f_{2m+1}\right\},$   are maximally isotropic subspaces of   $\mathbb C_n$ and $\mathbb C^n=W\oplus W'\oplus U,$ where $U$ is a $1$-dimensional space perpendicular to them and \begin{align*}
f_{m+1}=\mathbf i e_{2m+1}.
\end{align*}   We have one spin module \begin{align*}\mathbb S=\wedge^\bullet W,\end{align*} which is the irreducible representation of $\mathfrak{so}(2m+1,\mathbb C).$
$f_j$'s satisfy relations \begin{equation}\begin{aligned}\label{fjfk}
f_jf_k+f_kf_j=0,\ \bar f_j\bar f_k+\bar f_k\bar f_j=0,\ \bar f_j  f_k+  f_k\bar f_j=0,\ &{\rm for}\ j\neq k,\\f_{m+1}f_k+f_kf_{m+1}=0,\  f_{m+1}\bar f_k+\bar f_k\bar f_{m+1}=0,\ &{\rm for\ any}\ k.
\end{aligned}\end{equation}


The action of Clifford algebra   on the spin module $\mathbb S^\pm$ is given by $\gamma\left(f_s\right)\in {\rm End}\ \mathbb S^\pm.$
For any $a\in \mathbb C_n,$ $\gamma(a):(\wedge W)I\rightarrow(\wedge W)I$ is given by
\begin{align*}
\gamma(a)(bI)=abI,
\end{align*}
where $I=I_1I_2\cdots I_m,$ with $I_j=\bar f_jf_j,j=1,\cdots,m,$ is a primitive idempotent in $\mathbb C_n$ (cf. \cite[p. 115]{Delanghe}). A primitive idempotent of  $\mathbb C_n$ is a nonzero idempotent $I$ such that $I\mathbb C_n$ is indecomposable as a right $\mathbb C_n$-module; that is, such that $I\mathbb C_n$ is not a direct sum of two nonzero submodules. Equivalently, $I$ is a primitive idempotent if it cannot be written as $I = J+K,$ where $J$ and $K$ are nonzero orthogonal idempotents in $\mathbb C_n.$ Then  $\mathbb S^+,\mathbb S^-$ and $\mathbb S$ can be realized as $\wedge^{\rm even}WI,\wedge^{\rm odd}WI$ and $\wedge^\bullet WI,$ respectively.
For any $\alpha=\left\{\alpha_1,\cdots,\alpha_h\right\}\subset\{1,\cdots,m\}$ with $\alpha_1<\alpha_2<\cdots<\alpha_h,$ set
\begin{align}\label{ba}
f_\alpha:=f_{\alpha_1}\wedge\cdots\wedge f_{\alpha_h},
\end{align}
which constitute a basis of $\mathbb C_n.$

\begin{thm}\label{cli}{\rm(cf. \cite[p.116, Theorem 1]{Delanghe})}
For $j=1,\cdots,m,$\\
{\rm (1)} $\gamma\left(f_j\right)f_\alpha I =\left(f_j\wedge f_\alpha\right)I;$\\
{\rm (2)} $\gamma\left(\bar f_j\right)f_\alpha I=\bar f_j\left(f_\alpha\right)I;$\\
{\rm (3)} $\gamma\left(f_{m+1}\right)f_\alpha I=(-1)^{\sharp \alpha}f_\alpha I,$\\
where \begin{align*} \bar f_j\left(f_\alpha \right)=\left\{
\begin{array}{lcl}&0,&{\rm if}\ j\notin \alpha,\\&(-1)^{t+1}f_{\alpha'},&{\rm if}\ j=\alpha_t, \alpha'=\alpha\setminus\{\alpha_t\}\end{array}\right.,\end{align*} and   $\sharp\alpha=h$ if $f_\alpha=f_{\alpha_1}\wedge\cdots\wedge f_{\alpha_h}.$
\end{thm}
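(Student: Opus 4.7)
The plan is to verify each of the three formulas by unwinding the definition $\gamma(a)(bI)=abI$ and exploiting the anticommutation relations (\ref{fjfk}) together with the idempotent structure of $I=I_1\cdots I_m$, where $I_k=\bar f_kf_k$. The guiding observation is that within the right ideal $(\wedge W)I$, powers of $\bar f_j$ can be pushed to the right where they meet $I_j$ and either annihilate (because $\bar f_j\bar f_j=0$) or get absorbed into the idempotent.

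For part (1), I would simply compute $\gamma(f_j)f_\alpha I=f_jf_\alpha I$. If $j\in\alpha$ then two copies of $f_j$ appear in the ordered product, and repeated application of $f_jf_j=0$ (from (\ref{fjfk}) with $k=j$) kills the product, matching $f_j\wedge f_\alpha=0$. If $j\notin\alpha$, anticommuting $f_j$ into its correct ordered position introduces exactly the sign that appears in the wedge product definition of $f_{\{j\}\cup\alpha}$, so the two sides agree.

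For part (2), I would again start with $\gamma(\bar f_j)f_\alpha I=\bar f_jf_\alpha I$ and slide $\bar f_j$ to the right past $f_{\alpha_1},\ldots,f_{\alpha_h}$ using $\bar f_jf_k=-f_k\bar f_j$ for $j\neq k$. Two cases arise. When $j\notin\alpha$, $\bar f_j$ passes all the way through $f_\alpha$ (with some sign) and then meets $I$; reordering the factors of $I$ to bring $I_j=\bar f_jf_j$ to the front and using $\bar f_j\bar f_j=0$, the result vanishes, as claimed. When $j=\alpha_t$, after passing $\bar f_j$ past $f_{\alpha_1},\ldots,f_{\alpha_{t-1}}$ with sign $(-1)^{t-1}$, the next factor is $\bar f_jf_j$. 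I then need the key identity $\bar f_jf_jf_{\alpha_{t+1}}\cdots f_{\alpha_h}I=f_{\alpha_{t+1}}\cdots f_{\alpha_h}I$, which follows because $\bar f_jf_j$ commutes (modulo terms killed by $I$) with the remaining $f_{\alpha_s}$'s (they involve different indices), and then $\bar f_jf_j\cdot I=I_j\cdot I=I$ by idempotence. Assembling signs yields $\bar f_j(f_\alpha)I$ in the stated form; this case analysis and sign bookkeeping is the main technical step.

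For part (3), I would use that $f_{m+1}=\mathbf i e_{2m+1}$ anticommutes with every $e_{2k-1}$ and $e_{2k}$ for $k\leq m$, hence with each $f_k$ and $\bar f_k$. Consequently $f_{m+1}f_\alpha=(-1)^{\sharp\alpha}f_\alpha f_{m+1}$, while $f_{m+1}$ commutes with each $I_k=\bar f_kf_k$ (each $I_k$ is a product of two anticommuting factors with $f_{m+1}$), so $f_{m+1}I=If_{m+1}$. Finally, the convention defining the odd-dimensional spin module forces $f_{m+1}$ to act as $+1$ on $I$ (this uses $f_{m+1}^2=-e_{2m+1}^2=1$, so $f_{m+1}$ is a sign on the irreducible $\mathbb S$, fixed to $+1$ by the realization), yielding the factor $(-1)^{\sharp\alpha}$. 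The only subtle point is clarifying this normalization convention; once that is pinned down, the identity is immediate.
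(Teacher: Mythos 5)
The paper does not prove this theorem --- it is cited directly from \cite[p.116, Theorem 1]{Delanghe} --- so I will assess your argument on its own terms. Parts (1) and (2) are correct: the Clifford product of $f_j$'s agrees with the wedge product because $f_j^2=0$ and they pairwise anticommute, which gives (1); for (2), sliding $\bar f_j$ rightward past the $f_{\alpha_s}$'s picks up the sign $(-1)^{t-1}=(-1)^{t+1}$ when $j=\alpha_t$, and the identities $\bar f_j I_j=0$ (killing the $j\notin\alpha$ case) and $\bar f_jf_j=I_j$ together with $I_jI=I$ close the argument, exactly as you indicate.

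Part (3), however, has a real gap. Your computation reduces the claim to $f_{m+1}I=I$, but with $I=I_1\cdots I_m$ this is not an identity in $\mathbb C_{2m+1}$: for $m=1$, $I=\tfrac12(1-\mathbf i e_1e_2)$ while $f_2I=\tfrac12(\mathbf i e_3+e_1e_2e_3)\neq I$. Indeed $f_{m+1}I$ does not even lie in $(\wedge W)I$, and $I_1\cdots I_m$ fails to be primitive in the odd case (the left ideal $\mathbb C_{2m+1}I$ has dimension $2^{m+1}$, not $2^m$), so the recipe $\gamma(a)(bI)=abI$ does not by itself produce an endomorphism of $(\wedge W)I$ when $a$ involves $f_{m+1}$. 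Your parenthetical justification --- that $f_{m+1}^2=1$ makes ``$f_{m+1}$ a sign on the irreducible $\mathbb S$'' --- is also incorrect: $f_{m+1}$ is not central, and $\gamma(f_{m+1})$ is the parity operator on $\wedge W$, equal to $+1$ on $\wedge^{\mathrm{even}}W$ and $-1$ on $\wedge^{\mathrm{odd}}W$, not a scalar. To repair (3) you should either replace $I$ by the genuine primitive idempotent $I'=I\cdot\tfrac12\left(1+f_{m+1}\right)$, which commutes with $f_{m+1}$ and satisfies $f_{m+1}I'=I'$, so that your anticommutation computation then closes; or argue representation-theoretically: $\gamma(f_{m+1})$ anticommutes with every $\gamma(\bar f_j)$, hence preserves the one-dimensional vacuum line $\bigcap_j\ker\gamma(\bar f_j)=\mathbb C I$, on which it acts by $\pm1$ since $\gamma(f_{m+1})^2=1$; the normalization $\gamma(f_{m+1})I=I$ is then a convention, and the factor $(-1)^{\sharp\alpha}$ follows by anticommuting $\gamma(f_{m+1})$ past $\gamma(f_{\alpha_1})\cdots\gamma(f_{\alpha_h})$.
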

Denote $\gamma_j:=\gamma\left(e_{j}\right).$ Since
\begin{align*}
e_{2j-1}=f_j-\bar f_j,\quad  e_{2j}=-\mathbf i\left(f_j-\bar f_j\right),\quad  e_{2m+1}=-\mathbf if_{m+1},
\end{align*}
it follows form (\ref{fjfk}) that $\gamma_j:\mathbb S^\pm\rightarrow\mathbb S^\mp,$
and
\begin{align}\label{comm}
\gamma_j\gamma_k+\gamma_k\gamma_j=-2\delta_{jk}\bf 1.
\end{align}
Define an Hermitian  inner product on $\mathbb S^\pm$ by  \begin{align*}
\left\langle f_\alpha,f_\beta\right\rangle_{\mathbb S^\pm}:=\delta_{\alpha\beta}.
\end{align*}
\begin{lem} The adjoint $\gamma_k^*$   of $\gamma_k$ with respect to this inner product  is given by
\begin{align}\label{star}\gamma_k^*=-  \gamma_k.\end{align}
\end{lem}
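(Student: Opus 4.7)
The plan is to reduce the computation to the observation that, under the identification in Theorem~\ref{cli}, $\gamma(f_j)$ acts as wedge (a creation operator) and $\gamma(\bar f_j)$ acts as contraction (an annihilation operator) on the exterior algebra $\wedge W\cdot I$, and that these two operations are mutually adjoint with respect to the inner product $\langle f_\alpha,f_\beta\rangle=\delta_{\alpha\beta}$. Once this is established, formula (\ref{star}) will follow at once from the expressions relating $e_k$ to $f_j$ and $\bar f_j$.

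First I would verify $\gamma(f_j)^*=\gamma(\bar f_j)$ directly on the basis. By Theorem~\ref{cli}(1), $\gamma(f_j)f_\alpha I=(-1)^{\tau}f_{\alpha\cup\{j\}}I$ when $j\notin\alpha$, where $\tau$ is the number of indices in $\alpha$ smaller than $j$, and $0$ otherwise. By Theorem~\ref{cli}(2), $\gamma(\bar f_j)f_\beta I=(-1)^{t+1}f_{\beta\setminus\{j\}}I$ when $j=\beta_t\in\beta$. A one-line check shows the two pairings $\langle\gamma(f_j)f_\alpha,f_\beta\rangle$ and $\langle f_\alpha,\gamma(\bar f_j)f_\beta\rangle$ are both nonzero precisely when $\beta=\alpha\cup\{j\}$, and in that case the position $t$ of $j$ in the sorted $\beta$ is $\tau+1$, so the signs $(-1)^{\tau}$ and $(-1)^{t+1}$ match. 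Hence $\gamma(f_j)^*=\gamma(\bar f_j)$, and taking adjoints again, $\gamma(\bar f_j)^*=\gamma(f_j)$.

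Next I would assemble (\ref{star}) using the decompositions $e_{2j-1}=f_j-\bar f_j$ and $e_{2j}=-\mathbf i(f_j+\bar f_j)$ (the latter being the correct form consistent with the definitions of $f_j,\bar f_j$). For odd $k=2j-1$,
\[
\gamma_{2j-1}^{*}=\bigl(\gamma(f_j)-\gamma(\bar f_j)\bigr)^{*}=\gamma(\bar f_j)-\gamma(f_j)=-\gamma_{2j-1}.
\]
For even $k=2j$, the scalar $-\mathbf i$ in front conjugates to $+\mathbf i$, while the sum $\gamma(f_j)+\gamma(\bar f_j)$ is preserved by taking adjoints, so
\[
\gamma_{2j}^{*}=\mathbf i\bigl(\gamma(\bar f_j)+\gamma(f_j)\bigr)=-\bigl(-\mathbf i\bigr)\bigl(\gamma(f_j)+\gamma(\bar f_j)\bigr)=-\gamma_{2j}.
\]
For the odd-dimensional case $n=2m+1$, I would use Theorem~\ref{cli}(3), which says $\gamma(f_{m+1})$ acts diagonally as $(-1)^{\sharp\alpha}$ in the basis $\{f_\alpha I\}$ and is therefore self-adjoint; combined with $e_{2m+1}=-\mathbf i f_{m+1}$ and the same scalar-conjugation trick, this yields $\gamma_{2m+1}^{*}=-\gamma_{2m+1}$.

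The only non-routine point is the sign accounting in Step~2, so I would lay that out carefully; the remaining steps are pure linearity plus the behavior of the Hermitian inner product under the scalar $-\mathbf i$. The proof shows, in effect, that the whole statement is forced by the standard duality between creation and annihilation operators on $\wedge W$ together with the imaginary scalar appearing in the passage from the isotropic basis $\{f_j,\bar f_j\}$ back to the real basis $\{e_k\}$.
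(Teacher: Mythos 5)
Your proof is correct, and it is organized along a somewhat cleaner line than the paper's. The paper verifies $\langle\gamma_k f_\alpha,f_\beta\rangle=-\langle f_\alpha,\gamma_k f_\beta\rangle$ directly on the monomial basis, working separately with $\gamma_{2j-1}$, $\gamma_{2j}$ and $\gamma_{2m+1}$ and matching signs case by case via Theorem~\ref{cli}. You instead isolate the structural fact $\gamma(f_j)^*=\gamma(\bar f_j)$ --- creation and annihilation on $\wedge W\cdot I$ are mutual adjoints --- and then obtain (\ref{star}) by pure (conjugate-)linearity from $e_{2j-1}=f_j-\bar f_j$, $e_{2j}=-\mathbf i(f_j+\bar f_j)$, $e_{2m+1}=-\mathbf i f_{m+1}$. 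The sign bookkeeping in your Step~2 ($t=\tau+1$, hence $(-1)^{t+1}=(-1)^\tau$) is exactly the computation the paper hides inside each of its three cases, so the two proofs have the same analytic content; your version makes the underlying mechanism transparent and would generalize with no extra work to any operator built linearly from $\gamma(f_j),\gamma(\bar f_j)$. You are also right that the displayed relation $e_{2j}=-\mathbf i(f_j-\bar f_j)$ in Section~2.1 of the paper is a typo: from $f_j=\tfrac12(e_{2j-1}+\mathbf i e_{2j})$ and $\bar f_j=-\tfrac12(e_{2j-1}-\mathbf i e_{2j})$ one gets $f_j+\bar f_j=\mathbf i e_{2j}$, so the correct relation is $e_{2j}=-\mathbf i(f_j+\bar f_j)$, which is what you use.
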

\begin{proof}
We only need to prove
\begin{equation*}
\begin{aligned}
\left\langle\gamma_kf_\alpha,f_\beta\right\rangle_{\mathbb S^\pm}=-\left\langle f_\alpha, \gamma_k f_\beta\right\rangle_{\mathbb S^\mp},
\end{aligned}\end{equation*}
where $f_\alpha$ is the basis of  $\mathbb C_n$ given in (\ref{ba}). It is easy to see that
$\left\langle\gamma_{2j-1}f_\alpha,f_\beta\right\rangle_{\mathbb S^\mp}$ $\neq0$   only if $\alpha\cup\{j\}=\beta$ or $\beta\cup\{j\}=\alpha.$ If $\alpha\cup\{j\}=\beta$ and $j$ is the $k$-th element of $\beta,$ we have $$\left\langle\gamma_{2j-1}f_\alpha,f_\beta\right\rangle_{\mathbb S^\pm}=(-1)^{k-1}=-\left\langle f_\alpha,\gamma_{2j-1}f_\beta\right\rangle_{\mathbb S^\mp},$$ by Theorem \ref{cli}. So $\gamma_{2j-1}^*=-\gamma_{2j-1},$ for $j=1,\cdots,m.$ Similarly, we can prove the $\beta\cup\{j\}=\alpha$ case.

Also $\left\langle\gamma_{2j}f_\alpha,f_\beta\right\rangle_{\mathbb S^\mp}\neq0$   only if $\alpha\cup\{j\}=\beta$ or $\beta\cup\{j\}=\alpha.$ If $\alpha\cup\{j\}=\beta$ and $j$ is the $k$-th element of $\beta,$ we have $$\left\langle\gamma_{2j}f_\alpha,f_\beta\right\rangle_{\mathbb S^\pm}=-\mathbf i(-1)^{k-1}=-\left\langle f_\alpha,\gamma_{2j}f_\beta\right\rangle_{\mathbb S^\mp},$$ by Theorem \ref{cli}. So $\gamma_{2j}^*=-\gamma_{2j},$ for $j=1,\cdots,m.$ Similarly, we can prove the $\beta\cup\{j\}=\alpha$ case.

$\left\langle\gamma_{2m+1}f_\alpha,f_\beta\right\rangle_{\mathbb S^\mp}\neq0$  only if $\alpha=\beta.$ In this case
$$\left\langle\gamma_{2m+1}f_\alpha,f_\alpha\right\rangle_{\mathbb S^\pm}=-\mathbf i(-1)^{\sharp\alpha}=-\left\langle f_\alpha,\gamma_{2m+1}f_\alpha\right\rangle_{\mathbb S^\mp},$$ by Theorem \ref{cli}. So $\gamma_{2m+1}^*=-\gamma_{2m+1}.$
The lemma is proved.
\end{proof}
\subsection{Operators in the Dirac complex of two vector variables}

Note that
\begin{equation}\begin{aligned}\label{Delta}
\nabla_A\nabla_A=&\sum_{i,j=1}^n\gamma_i{\partial_{A i}}\gamma_j \partial_{A j}=-\sum_{j=1}^n{\partial^2_{A j}}:=\Delta_A,
\end{aligned}\end{equation}
by (\ref{comm}), for $A=0,1.$

Define the Hermitian inner product on $L^2\left(\mathbb R^{2n},\mathscr V_j\right),$ for $j=1,3,$ by
\begin{equation*}
(\phi,\psi)_{j}:=\sum_{A=0,1}\int_{\mathbb R^{2n}}\left\langle\phi_{A}(\mathbf x),{\psi_A(\mathbf x)}\right\rangle_{\mathbb S^-}\hbox{d}V(\mathbf x),
\end{equation*}
for  $\phi,\psi\in L^2\left(\mathbb R^{2n},\mathscr V_j\right),$ and define the Hermitian inner product on $L^2\left(\mathbb R^{2n},\mathscr V_j\right),$ for $j=0,2,$ by  \begin{equation*}
(\Phi,\Psi)_{j}=\int_{\mathbb R^{2n}}\left\langle\Phi(\mathbf x),{\Psi(\mathbf x)}\right\rangle_{\mathbb S^+}\hbox{d}V(\mathbf x),
\end{equation*}
for  $\Phi,\Psi\in L^2\left(\mathbb R^{2n},\mathscr V_j\right),$ where \begin{align*}
\hbox{d}V(\mathbf x)=\hbox{d}x_{01}\wedge\cdots\wedge\hbox{d}x_{0n} \wedge\hbox{d}x_{11}\wedge\cdots\wedge\hbox{d}x_{1n},
\end{align*}
is the standard volume form on $\mathbb R^{2n}.$ We can rewrite the operator in (\ref{D}) into the matrix form as follows. $\mathscr D_0:\Gamma\left(\mathbb S^+\right)\rightarrow\Gamma\left(\mathbb C^2\otimes \mathbb S^-\right)$ is given by
\begin{equation}
\begin{aligned}\label{D2}
\left(\begin{matrix}  \nabla_0\\\nabla_1\end{matrix}\right)= \left(\begin{matrix}  \sum_j \gamma_j {\partial_{0j}}\\ \sum_k\gamma_k{\partial_{1k}}\end{matrix}\right), \end{aligned}\end{equation}
$\mathscr D_1:\Gamma\left(\mathbb C^2\otimes\mathbb S^-\right)\rightarrow\Gamma\left(\mathbb C^2\otimes\mathbb S^+\right)$ is given by
\begin{equation*}
\begin{aligned} \left(\begin{array}{cc} -\nabla_1\nabla_0 & \nabla_0\nabla_0\\ -\nabla_1\nabla_1 & \nabla_0\nabla_1\end{array}\right)=\left(\begin{array}{cc} -\sum_{j,k}\gamma_k\gamma_j{\partial_{0j}}{\partial_{1k}} & \Delta_0\\ -\Delta_1 &\sum_{j,k}\gamma_j\gamma_k{\partial_{0j}}{\partial_{1k}}\end{array}\right),
\end{aligned}\end{equation*}
while
$\mathscr D_2:\Gamma\left(\mathbb C^2\otimes\mathbb S^+\right)\rightarrow\Gamma\left(\mathbb S^-\right)$ is given by
\begin{equation}
\begin{aligned}\label{d22}\left(-\nabla_1,\nabla_0\right) =\left(-\sum_k\gamma_k{\partial_{1k}},\sum_j\gamma_j {\partial_{0j}}\right).
\end{aligned}\end{equation}
Let $\mathscr D_j^*$ be the formal adjoint of the operator $\mathscr D_j$ in (\ref{D}), i.e.
\begin{align}\label{Dstar}
\left(\mathscr D_j\phi,\psi\right)_{j+1}=\left(\phi,\mathscr D_j^*\psi\right)_j
\end{align}
for any $\phi\in C_0^\infty \left(\mathbb R^{2n},{\mathscr V_j}\right),\psi\in C_0^\infty\left(\mathbb R^{2n},{\mathscr V_{j+1}}\right).$
\begin{prop}\label{p23}
For any $\phi\in C_0^\infty\left(\mathbb R^{2n},\mathscr V_0\right),$ we have
\begin{align*}
\mathscr D_0^*\mathscr D_0\phi=\Delta\phi,
\end{align*}
where $\Delta=\Delta_0+\Delta_1$ is the Laplacian operator on $\mathbb R^{2n}.$
\end{prop}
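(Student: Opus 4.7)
The plan is to compute $\mathscr{D}_0^*$ explicitly and then reduce $\mathscr{D}_0^*\mathscr{D}_0$ to a sum of two squared Dirac operators, each of which equals $\Delta_A$ by the already-established identity (\ref{Delta}).

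First I would identify the formal adjoint of each block $\nabla_A$ appearing in the matrix (\ref{D2}). For $\phi\in C_0^\infty(\mathbb{R}^{2n},\mathbb{S}^+)$ and $\psi\in C_0^\infty(\mathbb{R}^{2n},\mathbb{S}^-)$, I pair $\nabla_A\phi$ with $\psi$ under the $\mathbb{S}^-$-inner product and integrate by parts. Two ingredients enter: the real partial derivatives satisfy $\partial_{Aj}^*=-\partial_{Aj}$ on compactly supported functions, and the Dirac matrices satisfy $\gamma_j^*=-\gamma_j$ by (\ref{star}). Since $\gamma_j$ acts only in the spinor variable and $\partial_{Aj}$ only in the coordinate variable, the two minus signs cancel and I obtain $\nabla_A^*=\nabla_A$, now viewed as an operator $\mathbb{S}^-\to\mathbb{S}^+$.

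Next, from the column form $\mathscr{D}_0\phi=(\nabla_0\phi,\nabla_1\phi)^{T}$ and the defining relation (\ref{Dstar}), the adjoint on a pair $\psi=(\psi_0,\psi_1)^{T}\in C_0^\infty(\mathbb{R}^{2n},\mathscr{V}_1)$ reads
\begin{equation*}
\mathscr{D}_0^*\psi=\nabla_0^*\psi_0+\nabla_1^*\psi_1=\nabla_0\psi_0+\nabla_1\psi_1.
\end{equation*}
Composing, I get $\mathscr{D}_0^*\mathscr{D}_0\phi=\nabla_0\nabla_0\phi+\nabla_1\nabla_1\phi$, and the Clifford calculation in (\ref{Delta}) gives $\nabla_A\nabla_A=\Delta_A$, so the sum collapses to $\Delta_0\phi+\Delta_1\phi=\Delta\phi$, matching the convention stated in the proposition.

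There is really no obstacle here: the proof is a one-line computation once the adjoint of $\nabla_A$ is identified. The only place where care is required is keeping track of the two sign flips (one from $\gamma_j^*=-\gamma_j$, one from integration by parts on $\partial_{Aj}$) so that they cancel and yield $\nabla_A^*=\nabla_A$ rather than $-\nabla_A$; getting this right is what makes $\mathscr{D}_0^*\mathscr{D}_0$ come out to $\Delta_0+\Delta_1$ instead of $-(\Delta_0+\Delta_1)$.
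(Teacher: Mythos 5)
Your proof is correct and follows essentially the same route as the paper: derive $\nabla_A^*=\nabla_A$ from the cancellation of the sign from $\gamma_j^*=-\gamma_j$ in (\ref{star}) with the sign from integration by parts, read off $\mathscr D_0^*\psi=\nabla_0\psi_0+\nabla_1\psi_1$, and then apply the Clifford identity $\nabla_A\nabla_A=\Delta_A$ of (\ref{Delta}). No substantive difference from the paper's argument.
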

\begin{proof}
For $\phi\in C_0^\infty\left(\mathbb R^{2n},\mathbb S^\pm\right),$ and $\psi\in C_0^\infty\left(\mathbb R^{2n},\mathbb S^\mp\right),$ we have
\begin{equation*}
\begin{aligned}
\int_{\mathbb R^{2n}}\left\langle\nabla_A\phi,{\psi_A}\right\rangle_{\mathbb S^\mp}\hbox{d}V(\mathbf x)=&\int_{\mathbb R^{2n}}\sum_i\left\langle\gamma_i\partial_{Ai}\phi, {\psi_A}\right\rangle_{\mathbb S^\mp}\hbox{d}V(\mathbf x)\\=&\int_{\mathbb R^{2n}}\sum_i\left\langle\phi, \gamma_i\partial_{Ai}{\psi_A}\right\rangle_{\mathbb S^\pm}\hbox{d}V(\mathbf x)\\=&\int_{\mathbb R^{2n}}\left\langle\phi,{\nabla_A\psi_A}\right\rangle_{\mathbb S^\pm}\hbox{d}V(\mathbf x)
\end{aligned}\end{equation*}
by using (\ref{star}) and   Stokes's formula, i.e. the formal adjoint $\nabla_A^*$ of $\nabla_A$ satisfies
\begin{align}\label{nablastar}
\nabla_A^*=\nabla_A.
\end{align}Thus, we have \begin{equation*}
\begin{aligned}
\left(\mathscr D_0\phi,\psi\right)=\sum_{A=0,1}\int_{\mathbb R^{2n}}\left\langle\nabla_A\phi,{\psi_A}\right\rangle_{\mathbb S^-}\hbox{d}V(\mathbf x)=\left(\phi,\mathscr D_0^*\psi\right),
\end{aligned}\end{equation*} i.e.  \begin{align}
\mathscr D_0^*\psi=\sum_{A=0,1}\nabla_A\psi_A.
\end{align}
Thus \begin{equation}\begin{aligned}\label{dsd}
\mathscr D_0^*\mathscr D_0\phi=\sum_{A=0,1}\nabla_A\nabla_A\phi=\sum_{A=0,1}\sum_{j,k=0}^{n-1} \gamma_j{\partial_{Aj}}\gamma_k{\partial_{Ak}}\phi=\Delta\phi.
\end{aligned}\end{equation}
The proposition is proved.
\end{proof}
 Recall that the   symbol of a matrix differential operator $$\mathscr D = \sum_{|\alpha|\leq n}B_{\alpha_{A_1k_1}\cdots\alpha_{A_Nk_N}}(\mathbf x) \partial^{\alpha_{A_1k_1}}_{A_1k_1}\cdots \partial^{\alpha_{A_Nk_N}}_{A_Nk_N}: \Gamma(\Omega,V)\rightarrow \Gamma(\Omega,V')$$ at $(\mathbf x, \nu)$ for  $\mathbf x\in\Omega\subset\mathbb R^{2n},\mathbf 0\neq\nu\in\mathbb R^{2n}$  is defined as
{\small\begin{align}\label{symbol}
\sigma(\mathscr D)(\mathbf x;\nu):=\sum_{|\alpha|= n}B_{\alpha_{A_1k_1}\cdots\alpha_{A_Nk_N}}(\mathbf x) \left(\frac{\nu_{A_1k_1}}{\mathbf i}\right)^{\alpha_{A_1k_1}}\cdots \left(\frac{\nu_{A_Nk_N}}{\mathbf i}\right)^{\alpha_{A_Nk_N}}:V\rightarrow V',
\end{align}}
where $A_l\in\{0,1\},k_l\in\{1,\cdots,n\},$ and  $B_{\alpha_{A_1k_1}\cdots\alpha_{A_Nk_N}}$ is a linear transformation from vector space $V$ to $V'.$
A differential complex
\begin{equation*}
0\rightarrow C^\infty\left(\Omega,V_0\right)\xrightarrow{\mathscr{D}_{0}}\cdots \xrightarrow{\mathscr{D}_{n-1}} C^\infty\left(\Omega,V_n\right)\rightarrow 0,
\end{equation*} is called \emph{elliptic} if its symbol sequence
\begin{equation*}
0\rightarrow V_0 \xrightarrow{\sigma\left(\mathscr{D}_{0}\right){(\mathbf x;\nu)}}\cdots \xrightarrow{\sigma\left(\mathscr{D}_{n-1}\right){(\mathbf x;\nu)}} V_n\rightarrow 0,
\end{equation*} is exact for any $\mathbf x\in\Omega,\nu\in\mathbb R^{2n}\setminus\{\mathbf 0\},$ i.e. $$\ker\sigma\left(\mathscr{D}_{l}\right){(\mathbf x;\nu)}={\rm Im}\ \sigma\left(\mathscr{D}_{l-1}\right){(\mathbf x;\nu)}.$$
The  symbol of differential operator $\nabla_A$ is
 \begin{align}\label{simp}
\sigma\left(\nabla_A\right)(\mathbf x;\nu)=-{\mathbf i}\sum_{j=1}^n {\gamma_j\nu_{A j}}:=\nu_A
\end{align}

\begin{thm}\label{pell}
The complex {\rm(\ref{co})} with operators given by {\rm (\ref{D})} is an elliptic differential complex.
\end{thm}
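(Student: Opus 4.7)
\smallskip

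\noindent\textbf{Proof proposal for Theorem \ref{pell}.}
My plan is to write down the principal symbols of $\mathscr D_0,\mathscr D_1,\mathscr D_2$ explicitly and then verify the symbol sequence
\[
0\to \mathbb S^+ \xrightarrow{\sigma(\mathscr D_0)(\nu)} \mathbb C^2\otimes\mathbb S^-\xrightarrow{\sigma(\mathscr D_1)(\nu)}\mathbb C^2\otimes\mathbb S^+\xrightarrow{\sigma(\mathscr D_2)(\nu)}\mathbb S^-\to 0
\]
is exact for each $0\neq \nu=(\nu_0,\nu_1)\in\mathbb R^{2n}$. The central algebraic input is the identity $\nu_A^2=|\nu_A|^2\mathbf 1$ on $\mathbb S^\pm$, which is immediate from (\ref{Delta}) and (\ref{simp}): taking symbols of $\nabla_A\nabla_A=\Delta_A$ gives exactly this. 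Consequently, whenever $|\nu_A|\neq 0$, the map $\nu_A:\mathbb S^\pm\to \mathbb S^\mp$ is invertible with inverse $|\nu_A|^{-2}\nu_A$.

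Using (\ref{D2})--(\ref{d22}) and (\ref{simp}), the symbols are
\[
\sigma(\mathscr D_0)(\nu)=\begin{pmatrix}\nu_0\\ \nu_1\end{pmatrix},\quad
\sigma(\mathscr D_1)(\nu)=\begin{pmatrix}-\nu_1\nu_0 & \nu_0\nu_0\\ -\nu_1\nu_1 & \nu_0\nu_1\end{pmatrix},\quad
\sigma(\mathscr D_2)(\nu)=(-\nu_1,\nu_0).
\]
The complex property $\sigma(\mathscr D_{j+1})\sigma(\mathscr D_j)=0$ is then a short computation: for $\mathscr D_1\mathscr D_0$ each entry reduces to an expression like $-\nu_1\nu_0^2 f+\nu_0^2\nu_1 f=0$ using $\nu_0^2=|\nu_0|^2$, and $\sigma(\mathscr D_2)\sigma(\mathscr D_1)$ expands to four terms that cancel in pairs for the same reason.

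For exactness I would proceed positionwise, in each case splitting on whether $|\nu_0|\neq 0$ or $|\nu_0|=0$ (in which case $|\nu_1|\neq 0$). \emph{Injectivity of $\sigma(\mathscr D_0)(\nu)$:} if $\nu_Af=0$ and $|\nu_A|\neq 0$, then $f=|\nu_A|^{-2}\nu_A\nu_A f=0$. \emph{Exactness at $\mathbb C^2\otimes\mathbb S^-$:} given $(g_0,g_1)^T$ in the kernel, in the case $|\nu_0|\neq 0$ set $f=|\nu_0|^{-2}\nu_0 g_0$; then $\nu_0 f=g_0$, and the first row of the kernel condition reads $\nu_1\nu_0 g_0=|\nu_0|^2 g_1$, which after multiplying by $|\nu_0|^{-2}$ gives $\nu_1 f=g_1$. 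In the case $\nu_0=0$ the second row forces $|\nu_1|^2 g_0=0$ so $g_0=0$, and $f=|\nu_1|^{-2}\nu_1 g_1$ works. \emph{Exactness at $\mathbb C^2\otimes\mathbb S^+$:} given $(h_0,h_1)^T$ with $\nu_0 h_1=\nu_1 h_0$, the choice $(g_0,g_1)=(0,\,|\nu_0|^{-2}h_0)$ works when $|\nu_0|\neq 0$ (both equations verify directly after replacing $\nu_0\nu_1 h_0$ by $\nu_0^2 h_1=|\nu_0|^2 h_1$), and the case $\nu_0=0$ is analogous. \emph{Surjectivity of $\sigma(\mathscr D_2)(\nu)$:} given $h\in\mathbb S^-$, take $(h_0,h_1)=(0,|\nu_0|^{-2}\nu_0 h)$ or $(-|\nu_1|^{-2}\nu_1 h,0)$ depending on which component of $\nu$ is nonzero.

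There is no single hard step; the only thing to watch is the case analysis $|\nu_0|=0$ vs.\ $|\nu_0|\neq 0$, since one cannot simultaneously invert both $\nu_0$ and $\nu_1$ on a generic characteristic covector. The uniform algebraic device that makes every case go through is $\nu_A^2=|\nu_A|^2\mathbf 1$, i.e.\ the Clifford relation (\ref{comm}) translated to the symbol level.
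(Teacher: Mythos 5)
Your proof is correct and takes essentially the same route as the paper: write out the explicit symbols, exploit the Clifford identity $\nu_A^2=|\nu_A|^2\,\mathbf 1$ coming from (\ref{comm}), and then construct explicit preimages position by position (and your Step 1 and the injectivity check coincide exactly with the paper's). The only difference is cosmetic: for exactness at $\mathbb C^2\otimes\mathbb S^+$ and surjectivity of $\sigma_2$ the paper uses the symmetric preimages with denominator $|\nu_0|^2+|\nu_1|^2$, which avoids the case split on $\nu_0=0$ that your one-sided formulas require, but your constructions are equally valid since at least one of $|\nu_0|,|\nu_1|$ is nonzero.
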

\begin{proof}
Firstly, we prove  \begin{align}\label{d10}
\mathscr{D}_{l+1}\circ\mathscr{D}_l=0
\end{align}  for each $l=0,1.$

When   $l=0,$ noting  that $\mathscr D_0f=\left(\begin{matrix}  \nabla_0f\\\nabla_1f\end{matrix}\right)$ for $f\in\Gamma\left(\mathscr V_0\right),$ we have
\begin{equation*}\begin{aligned}
\left(\mathscr D_1\circ\mathscr D_0f\right)_0=&\left(\nabla_0\nabla_{0}\nabla_{1} -\nabla_{1}\nabla_{0}\nabla_{0}\right)f =\left(\Delta_{0}\nabla_{1}-\nabla_1\Delta_{0} \right)f=0,\\ \left(\mathscr D_1\circ\mathscr D_0f\right)_1=&\left(\nabla_0\nabla_{1}\nabla_{1} -\nabla_{1}\nabla_{1}\nabla_{0}\right)f =\left(\nabla_{0}\Delta_{1}-\Delta_{1}\nabla_0 \right)f=0,
\end{aligned}\end{equation*} by the definition of $\mathscr D_1$ in (\ref{D}) and using  (\ref{Delta}) and
\begin{align}\label{AB}
\nabla_B\Delta_A=\Delta_A\nabla_B,\quad A,B=0,1,
\end{align}
since $\Delta_A$ is a scalar differential  operator of constant coefficients. So $\mathscr{D}_{1}\circ\mathscr{D}_0=0.$

 When  $l=1,$ for $g=\left(\begin{matrix}  g_0\\g_1\end{matrix}\right)\in\Gamma\left(\mathscr V_1\right),$ we have
\begin{equation*}\begin{aligned}
\mathscr D_2\circ \mathscr D_1g=&\nabla_{0}\left(\mathscr D_1g\right)_1-\nabla_{1}\left(\mathscr D_1g\right)_0\\=&\nabla_0\left(\nabla_0\nabla_{1} g_{1}-\nabla_{1}\nabla_{1}g_{0}\right)-\nabla_1\left(\nabla_0\nabla_{0} g_{1}-\nabla_{1}\nabla_{0}g_{0}\right)\\=&\Delta_0\nabla_{1} g_{1}-\nabla_0\Delta_1g_{0}-\nabla_1\Delta_{0} g_{1}+\Delta_1\nabla_0 g_{0}=0,
\end{aligned}\end{equation*}
by the definition of $\mathscr D_2$ in (\ref{D}) and (\ref{AB}). So $\mathscr{D}_{2}\circ\mathscr{D}_1=0.$

Secondly,  let us  prove that {\rm(\ref{co})} is    elliptic, i.e.
 the symbol sequence
\begin{equation}\begin{aligned}\label{simco}
0\rightarrow\mathbb S^+\xrightarrow{\sigma_{0}(\nu)} \mathbb C^2\otimes\mathbb S^+\xrightarrow{\sigma_{1}(\nu)} \mathbb C^2\otimes\mathbb S^- \xrightarrow{\sigma_{2}(\nu)} \mathbb S^+\rightarrow0,
\end{aligned}\end{equation}is exact,  $\nu\in\mathbb R^{2n}\setminus\{\mathbf 0\},$ where $\sigma_l(\nu)=\sigma\left(\mathscr D_l\right)(\mathbf x;\nu)$ is independent of   $\mathbf x\in\mathbb R^{2n}.$

It follows from  $\mathscr D_{l+1}\circ\mathscr D_l=0$ that $\sigma_{l+1}\circ\sigma_l=0,$ i.e. ${\rm Im}\ \sigma_l(\nu)\in\ker\sigma_{l+1}(\nu).$ So  we only need to prove that $\sigma_0$ is injective, $\ker\sigma_l\subset{\rm Im}\ \sigma_{l-1},$ for $l=1,2,$ and $\sigma_2$ is surjective.\\
(1) Denote  $\left|\nu_A\right|^2:=\nu_{A1}^2+\cdots+\nu_{An}^2.$ Then
\begin{align}\label{nunu}
\nu_A\nu_A=-{\mathbf i}\sum_k {\gamma_k\nu_{A k}}\left(-{\mathbf i}\sum_j{\gamma_j\nu_{A j}}\right)=\left|\nu_A\right|^2{\rm id}_{\mathbb S^\pm}.
\end{align}  By  definition, for $\eta\in\mathscr V_0$   $$\sigma_0(\nu)\eta=\left(\begin{matrix}\nu_0\eta\\ \nu_1\eta\end{matrix}\right) \in\mathbb C^2\otimes\mathbb S^+.$$ For any $\eta\in\ker \sigma_0(\nu),$ we have $\nu_A\eta=0,$ for $A=0,1.$ Then \begin{align}\label{vv}
0=\nu_A\nu_A\eta=|\nu_A|^2\eta,
\end{align}
by (\ref{nunu}). So $\eta=0.$ Hence $\sigma_0$ is injective.
\vskip 5mm\noindent(2) By definition, for $\xi=\left(\begin{matrix}\xi_0 \\ \xi_1 \end{matrix}\right)\in\mathscr V_1,$ $$\sigma_1(\nu)\xi=\left(\begin{matrix}\nu_{0}\nu_{0}\xi_1- \nu_{1}\nu_{0}\xi_0\\ \nu_{0}\nu_{1}\xi_1- \nu_{1}\nu_{1}\xi_0\end{matrix}\right) \in\mathbb C^2\otimes\mathbb S^-.$$ Note that \begin{equation}\begin{aligned}\label{sig1}
\left(\sigma_1(\nu)\xi\right)_0=&|\nu_0|^2\xi_1- \nu_{1}\nu_{0}\xi_0,\\ \left(\sigma_1(\nu)\xi\right)_1=&\nu_{0}\nu_{1}\xi_1-|\nu_1|^2\xi_0,
\end{aligned}\end{equation} by (\ref{nunu}). If   $\xi=\left(\begin{matrix}\xi_0 \\ \xi_1 \end{matrix}\right)\in \ker \sigma_1(\nu)$ with $\left|\nu_0\right|\neq0,$ let
\begin{align*}
\eta=\frac{\nu_{0}\xi_0}{|\nu_0|^2}\in\mathscr V_0.
\end{align*}
We claim that $\sigma_0\eta=\xi.$
Since $\xi=\left(\begin{matrix}\xi_0 \\ \xi_1 \end{matrix}\right)\in \ker \sigma_1(\nu),$ we have
$|\nu_0|^2\xi_1-\nu_{1}\nu_{0}\xi_0=0$ by (\ref{sig1}). Then
\begin{align*}
\nu_{1}|\nu_0|^2\xi_1=\nu_{1}\nu_{1}\nu_{0}\xi_0=\nu_{0}|\nu_1|^2\xi_0,
\end{align*}
by (\ref{nunu}), i.e.
\begin{align}\label{eta}
\frac{\nu_{0}\xi_0}{|\nu_0|^2}=\frac{\nu_{1}\xi_1}{|\nu_1|^2}=\eta,
\end{align}if $\left|\nu_1\right|\neq0.$
Then
\begin{equation}\begin{aligned}\label{xixi}
\left(\sigma_0(\nu)\eta\right)_0 =\frac{\nu_{0}\nu_{0}}{|\nu_0|^2}\xi_0=\xi_0,\\ \left(\sigma_0(\nu)\eta\right)_1 =\frac{\nu_{1}\nu_{1}}{|\nu_0|^2}\xi_1=\xi_1,
\end{aligned}\end{equation}
by (\ref{nunu}) and (\ref{eta}). If $\left|\nu_1\right|=0,$ we must have $\xi_1=0,$ and so $\left(\sigma_0(\nu)\eta\right)_1=\nu_1\eta=0=\xi_1,$ i.e. (\ref{xixi})  also holds. Thus $\sigma_0\eta=\xi$ and so $\ker\sigma_1\subset{\rm Im}\ \sigma_0.$

 If $\left|\nu_0\right|=0,$ we get $\left|\nu_1\right|^2\xi_0=0$ by (\ref{sig1}). Since $\nu\neq\mathbf 0,$ we have $\xi_0=0.$ Then we have $$\sigma_0(\nu)\left(\nu_1^{-1}\xi_1\right)=\left(\begin{matrix}0 \\ \xi_1 \end{matrix}\right),$$ since $\nu_1$ is reversible.
\vskip 5mm\noindent
(3) By  definition, for $\zeta=\left(\begin{matrix}\zeta_0\\\zeta_1\end{matrix}\right)\in\mathscr V_1,$ we have   $$\sigma_2(\nu)\zeta=\nu_{0}\zeta_1-\nu_{1}\zeta_0\in \mathbb C\otimes\mathbb S^+.$$  For any   $\zeta=\left(\begin{matrix}\zeta_0\\\zeta_1\end{matrix}\right)\in \ker \sigma_2(\nu),$ let $\xi=\left(\begin{matrix}\xi_0\\\xi_1\end{matrix}\right)\in \mathscr V_1$ given by
\begin{equation}\begin{aligned}\label{xi}
\xi_0=-\frac{\zeta_1}{|\nu_0|^2 +|\nu_1|^2},\quad\xi_1=\frac{\zeta_0}{|\nu_0|^2+|\nu_1|^2}.
\end{aligned}\end{equation}
Since $\zeta=\left(\begin{matrix}\zeta_0\\\zeta_1\end{matrix}\right)\in \ker \sigma_2(\nu),$ we have
\begin{align}\label{sig2}
\sigma_2(\nu)\zeta=\nu_{0}\zeta_1-\nu_{1}\zeta_0=0.
\end{align}
Then  we have
\begin{equation*}\begin{aligned}
\left(\sigma_1(\nu)\xi\right)_0=&\nu_{0}\nu_{0}\xi_1- \nu_{1}\nu_{0}\xi_0=\frac{|\nu_0|^2\zeta_0}{|\nu_0|^2+|\nu_1|^2}+ \frac{ \nu_{1}\nu_{0}\zeta_1}{|\nu_0|^2+|\nu_1|^2}\\=& \frac{|\nu_0|^2\zeta_0}{|\nu_0|^2+|\nu_1|^2}+\frac{ \nu_{1}\nu_{1}\zeta_0}{|\nu_0|^2+|\nu_1|^2}=\zeta_0,
\end{aligned}\end{equation*}
  by using (\ref{xi})-(\ref{sig2}). Similarly,
\begin{equation*}\begin{aligned}
\left(\sigma_1(\nu)\xi\right)_1=&\nu_{0}\nu_{1}\xi_1- \nu_{1}\nu_{1}\xi_0=\frac{ \nu_{0}\nu_{1}\zeta_0}{|\nu_0|^2 +|\nu_1|^2}+\frac{|\nu_1|^2\zeta_1}{|\nu_0|^2+|\nu_1|^2}\\=&\frac{ \nu_{0}\nu_{0}\zeta_1}{|\nu_0|^2+|\nu_1|^2}+ \frac{|\nu_1|^2\zeta_1}{|\nu_1|^2+|\nu_1|^2}=\zeta_1.
\end{aligned}\end{equation*}
Thus $\sigma_1(\nu)\xi=\zeta$ and so $\ker\sigma_2(\nu)\subset{\rm Im}\sigma_1.$
\vskip 5mm\noindent   (4) For any   $\rho\in \mathscr V_3,$ let $\zeta=\left(\begin{matrix}\zeta_0\\\zeta_1\end{matrix}\right)\in \mathscr V_2$ be  given by
\begin{equation*}\begin{aligned}
\zeta_0=-\frac{\nu_{1}\rho}{|\nu_0|^2+|\nu_1|^2}, \quad\zeta_1=\frac{ \nu_{0}\rho}{|\nu_0|^2+|\nu_1|^2}.
\end{aligned}\end{equation*}
Then we have
\begin{equation*}\begin{aligned}
\sigma_2(\nu)\zeta=&\nu_{0}\zeta_1-\nu_{1}\zeta_0 =\frac{\nu_{0}\nu_{0}\rho}{|\nu_0|^2+|\nu_1|^2}+ \frac{\nu_{1}\nu_{1}\rho}{|\nu_0|^2+|\nu_1|^2}\\=& \frac{|\nu_0|^2\rho}{|\nu_0|^2+|\nu_1|^2} +\frac{|\nu_1|^2\rho}{|\nu_0|^2+|\nu_1|^2}=\rho.
\end{aligned}\end{equation*}
So $\sigma_2$ is surjective.
\end{proof}

\section{Solutions to the non-homogeneous Dirac  equations and the Hartogs'  phenomenon}
\subsection{Solutions to the non-homogeneous   equations}
The natural Hodge Laplacian associated to the differential  complex (\ref{co}) should be
\begin{align*}
\widetilde\Box_j=\mathscr D_{j-1}\mathscr D_{j-1}^*+\mathscr D_j^*\mathscr D_j,
\end{align*}
for $j=1,2.$ But $\mathscr D_1$ is a differential operator of second order while $\mathscr D_0,\mathscr D_2$ are of first order, the principal symbols of $\widetilde\Box_1$ and $\widetilde\Box_2$ are degenerate and so $\widetilde\Box_1$ and $\widetilde\Box_2$ are not uniformly elliptic.
So it is better to  consider the   Laplacians (\ref{hodge}) of forth order associated to the differential  complex (\ref{co}).

\begin{prop}\label{pbox}
\begin{equation}\begin{aligned}\label{box2}
\Box_1=\left(\mathscr D_0\mathscr D_0^*\right)^2+\mathscr D_1^*\mathscr D_1=\left(\begin{array}{cc} \Delta^2&\\  & \Delta^2\end{array}\right), \\\Box_2=\mathscr D_1\mathscr D_1^*+\left(\mathscr D_2^*\mathscr D_2\right)^2=\left(\begin{array}{cc} \Delta^2&\\  & \Delta^2\end{array}\right).
\end{aligned}\end{equation}
\end{prop}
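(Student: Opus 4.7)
The plan is a direct matrix computation, leveraging three facts already established in the excerpt: the self-adjointness $\nabla_A^* = \nabla_A$ from \eqref{nablastar}, the identity $\nabla_A\nabla_A = \Delta_A$ from \eqref{Delta}, and the fact that the scalar constant-coefficient operators $\Delta_A$ commute with every $\nabla_B$ (and with each other). Together these reduce every product $\nabla_A\nabla_B\nabla_B\nabla_C$ to $\Delta_B\nabla_A\nabla_C = \nabla_A\nabla_C\Delta_B$, which is exactly the kind of collapse needed.

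First I would write down the formal adjoints explicitly. From Proposition~\ref{p23} we already have $\mathscr D_0^*\psi = \nabla_0\psi_0+\nabla_1\psi_1$. For $\mathscr D_1$, viewed as the matrix
$\left(\begin{smallmatrix} -\nabla_1\nabla_0 & \Delta_0 \\ -\Delta_1 & \nabla_0\nabla_1\end{smallmatrix}\right)$, the adjoint is obtained by transposing and using $(\nabla_A\nabla_B)^* = \nabla_B\nabla_A$, giving
$\mathscr D_1^* = \left(\begin{smallmatrix} -\nabla_0\nabla_1 & -\Delta_1 \\ \Delta_0 & \nabla_1\nabla_0\end{smallmatrix}\right).$
Similarly, $\mathscr D_2 = (-\nabla_1,\nabla_0)$ has adjoint $\mathscr D_2^* = \left(\begin{smallmatrix}-\nabla_1\\ \nabla_0\end{smallmatrix}\right)$.

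For $\Box_1$ the plan is: compute $\mathscr D_0\mathscr D_0^* = \left(\begin{smallmatrix} \Delta_0 & \nabla_0\nabla_1 \\ \nabla_1\nabla_0 & \Delta_1\end{smallmatrix}\right)$, square it using $\nabla_A\Delta_B = \Delta_B\nabla_A$ and $\nabla_0\nabla_1\nabla_1\nabla_0 = \nabla_0\Delta_1\nabla_0 = \Delta_0\Delta_1$ to obtain
$(\mathscr D_0\mathscr D_0^*)^2 = \left(\begin{smallmatrix} \Delta_0\Delta & \nabla_0\nabla_1\Delta \\ \nabla_1\nabla_0\Delta & \Delta_1\Delta\end{smallmatrix}\right).$
Then compute $\mathscr D_1^*\mathscr D_1$ entrywise; the same collapses give
$\mathscr D_1^*\mathscr D_1 = \left(\begin{smallmatrix} \Delta_1\Delta & -\nabla_0\nabla_1\Delta \\ -\nabla_1\nabla_0\Delta & \Delta_0\Delta\end{smallmatrix}\right).$
Adding the two matrices, the off-diagonal entries cancel and each diagonal entry becomes $(\Delta_0+\Delta_1)\Delta = \Delta^2$. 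The computation of $\Box_2$ is completely parallel: compute $\mathscr D_2^*\mathscr D_2 = \left(\begin{smallmatrix}\Delta_1 & -\nabla_1\nabla_0 \\ -\nabla_0\nabla_1 & \Delta_0\end{smallmatrix}\right)$, square it, compute $\mathscr D_1\mathscr D_1^*$, and observe the same cancellation pattern.

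There is no real obstacle here; the proof is bookkeeping. The only thing to be careful about is the order of the non-commuting factors $\nabla_0$ and $\nabla_1$: one must never swap them directly, but only use the guaranteed identities $\nabla_A^2 = \Delta_A$ and the fact that scalar Laplacians commute through. The cancellation of the off-diagonal entries is not a coincidence but reflects the complex identity $\mathscr D_1\circ\mathscr D_0 = 0$ from \eqref{d10}: the terms $\pm\nabla_0\nabla_1\Delta$ that appear are exactly what is needed for $(\mathscr D_0\mathscr D_0^*)^2$ and $\mathscr D_1^*\mathscr D_1$ to annihilate each other off the diagonal, and one could shortcut part of the calculation by this observation, although doing the entries directly is the cleanest presentation.
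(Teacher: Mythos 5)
Your proposal is correct and follows essentially the same route as the paper: write down the formal adjoints in matrix form, compute $\mathscr D_0\mathscr D_0^*$, $\mathscr D_1^*\mathscr D_1$, $\mathscr D_1\mathscr D_1^*$, $\mathscr D_2^*\mathscr D_2$ and the needed squares entrywise using $\nabla_A\nabla_A=\Delta_A$ and commutation of the scalar operators $\Delta_A$, then add and observe the off-diagonal cancellation. Your closing remark tying the cancellation to $\mathscr D_1\circ\mathscr D_0=0$ is a pleasant conceptual gloss that the paper does not make explicit, but the underlying computation is the same.
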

\begin{proof}

By using  (\ref{nablastar}), it is direct to check by definition (\ref{Dstar}) that the  formal adjoints of $\mathscr D_0,\mathscr D_1,\mathscr D_2$ are given by
\begin{equation}
\begin{aligned}\label{ddd}
\mathscr D_0^*=& \left(\nabla_0, \nabla_1 \right),\\ \mathscr D_1^*=&\sum_{j,k}\left(\begin{array}{cc} -\nabla_0\nabla_1 & -\Delta_1\\ \Delta_0 & \nabla_1\nabla_0\end{array}\right),\\ \mathscr D_2^*=& \left(\begin{matrix}-\nabla_1\\  \nabla_0\end{matrix}\right),
\end{aligned}\end{equation}
respectively.
Recall  $\mathscr D_0^*\mathscr D_0=\Delta$ by (\ref{dsd}).
Compositions of matrix-valued differential operators give us  that \begin{equation}
\begin{aligned}\label{DD*}
\mathscr D_0\mathscr D_0^*=&\sum_{j,k} \left(\begin{array}{cc} \Delta_0&\nabla_0\nabla_1\\ \nabla_1\nabla_0&\Delta_1\end{array}\right),\\ \mathscr D_1^*\mathscr D_1=&\sum_{j,k} \left(\begin{array}{cc} \Delta_0\Delta_1+\Delta_1^2& -\nabla_0\nabla_1\left(\Delta_0+\Delta_1\right)\\ -\nabla_1\nabla_0\left(\Delta_0+\Delta_1\right)&\Delta_0 \Delta_1+\Delta_0^2\end{array}\right), \\ \mathscr D_1\mathscr D_1^*=&\sum_{j,k} \left(\begin{array}{cc} \Delta_0\Delta_1+\Delta_0^2&\nabla_1\nabla_0 \left(\Delta_0+\Delta_1\right)\\ \nabla_0\nabla_1 \left(\Delta_0+\Delta_1\right)&\Delta_0\Delta_1+\Delta_1^2\end{array} \right), \\\mathscr D_2^*\mathscr D_2=&  \left(\begin{array}{cc} \Delta_1& -\nabla_1\nabla_0\\ -\nabla_0\nabla_1 &\Delta_0\end{array}\right),
\end{aligned}\end{equation}
by expressions in (\ref{ddd}) and (\ref{D2})-(\ref{d22}), and then we have
\begin{equation*}\begin{aligned}
\left(\mathscr D_0^*\mathscr D_0\right)^2=&\Delta^2,\\ \left(\mathscr D_0\mathscr D_0^*\right)^2=&\sum_{j,k} \left(\begin{array}{cc} \Delta_0^2+\Delta_0\Delta_1&\nabla_0\nabla_1\left(\Delta_0+\Delta_1\right) \\ \nabla_1\nabla_0 \left(\Delta_0+\Delta_1\right)&\Delta_0\Delta_1 +\Delta_1^2\end{array}\right), \\\left(\mathscr D_2^*\mathscr D_2\right)^2=&\sum_{j,k} \left(\begin{array}{cc} \Delta_1^2+\Delta_0\Delta_1&- \nabla_1\nabla_0\left(\Delta_0+\Delta_1\right)\\ -\nabla_0\nabla_1 \left(\Delta_0+\Delta_1\right)&\Delta_0\Delta_1+\Delta_0^2\end{array} \right).
\end{aligned}\end{equation*}
Thus (\ref{box2}) follows.
\end{proof}

So $\Box_j$ are uniformly elliptic differential operator of fourth order.
It is known that
\begin{align}\label{G0}
G_0=-\frac{C_{2n}}{|\mathbf x|^{2n-4}},
\end{align}
 is the fundamental solution of the  operator $\Delta^2$ on $\mathbb R^{2n}$ for some positive constant $C_{2n}.$ 
Let
\begin{align}\label{G1}
G_1=G_2=\left(\begin{array}{cc} G_0&\\  & G_0\end{array}\right).
\end{align}

The following proposition for homogeneous distributions and singular integral operators is well known. %
\begin{prop}{\rm(\cite[Proposition 2.4.7]{Grafakos})}\label{PV} Let $K\in C^\infty\left(\mathbb R^{2n}\setminus\{\mathbf 0\}\right)$   be a homogeneous function of degree $k-2n.$ Let $\mathbf K$ be the operator defined by $\mathbf K\phi = \phi *K.$
Then, for $\phi\in C_0^\infty\left(\mathbb R^{2n}\right)$,  $A_1,\cdots,A_k\in\{0,1\},j_1,\cdots, j_k\in\{1,\cdots, 2n\},$
\begin{align}\label{3.2}
\partial_{A_1j_1}\cdots\partial_{A_kj_k}(\mathbf K\phi)=P.V.\left(\phi*\partial_{A_1j_1}\cdots\partial_{A_kj_k}K\right) +a_{A_1j_1\cdots A_kj_k}\phi,
\end{align}
and each term in {\rm (\ref{3.2})} is $C^\infty$ and the identity holds as $C^\infty$ functions, where $a_{A_1j_1\cdots A_kj_k}$ is a constant.

Moreover, $\partial_{A_1j_1}\cdots\partial_{A_kj_k}  K$  is a Calderon--Zygmund kernel on $\mathbb R^{2n}.$ The singular integral operator $f\rightarrow P.V.$ $ \left(f *\partial_{A_1j_1}\cdots\partial_{A_kj_k}K\right)$ is bounded
on $L^p$ for $1 < p <\infty.$ {\rm (\ref{3.2})} holds as $L^p$ functions.
\end{prop}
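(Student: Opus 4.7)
The plan is to reduce the proposition to the classical Calder\'on--Zygmund $L^p$ theory through a careful distributional calculation. I would organise the argument in three steps: establish the pointwise identity \eqref{3.2} on $C_0^\infty$, verify that $\partial_{A_1j_1}\cdots\partial_{A_kj_k}K$ is a Calder\'on--Zygmund kernel on $\mathbb R^{2n}$, and then extend \eqref{3.2} to $L^p$ by continuity.

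Since $K$ is homogeneous of degree $k-2n$ with $k\geq1$, we have $K\in L^1_{\rm loc}\left(\mathbb R^{2n}\right)$, so $\mathbf K\phi=\phi*K$ is a well-defined $C^\infty$ function for $\phi\in C_0^\infty$. I would prove \eqref{3.2} by induction on $k$. Passing the first derivative inside the convolution and integrating by parts on $\{|y|>\epsilon\}$ produces the principal value of the convolution with $\partial K$ together with a boundary term on $\{|y|=\epsilon\}$. By the exact homogeneity, this boundary integral scales as $\epsilon^{k-1}$: it vanishes in the limit when $k\geq2$ and contributes a constant multiple of $\phi(x)$ when $k=1$, the constant being a specific spherical average of $K$ against the unit normal. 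Iterating this passage over all $k$ derivatives and collecting the top principal-value term together with the accumulated boundary residues produces \eqref{3.2}, with $a_{A_1j_1\cdots A_kj_k}$ emerging explicitly from those residues.

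For the Calder\'on--Zygmund property, smoothness away from the origin and homogeneity of degree $-2n$ give at once the size bound $|\partial_{A_1j_1}\cdots\partial_{A_kj_k}K(x)|\leq C|x|^{-2n}$ and the corresponding H\"ormander-type gradient bound. The one nontrivial point is the mean-value cancellation $\int_{|\theta|=1}\partial_{A_1j_1}\cdots\partial_{A_kj_k}K(\theta)\,d\sigma(\theta)=0$, for which I would write the kernel as $\partial_{A_kj_k}S$ with $S$ smooth away from the origin and homogeneous of degree $-2n+1$. Applying the divergence theorem to $S$ on the annulus $\{\epsilon<|x|<R\}$ gives a difference of sphere integrals of $S$ which, by the exact scaling of $S$, are each independent of their radius and therefore cancel. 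Translating the resulting vanishing interior integral into polar coordinates expresses it as $\log(R/\epsilon)$ times the desired spherical mean, forcing the mean to vanish. The classical Calder\'on--Zygmund theorem then supplies the $L^p$ boundedness for $1<p<\infty$, and density of $C_0^\infty$ in $L^p$ extends the identity \eqref{3.2}.

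The step I expect to dominate the effort is reconciling the residue constants $a_{A_1j_1\cdots A_kj_k}$ arising in the inductive derivation of \eqref{3.2} with the sphere-cancellation of the kernel: one must verify that the boundary terms produced at each stage of the induction are precisely what is needed to make the leading singular part carry no spherical mean, so that \eqref{3.2} is an honest identity of $C^\infty$ functions and not merely one modulo a multiple of $\delta_{\mathbf 0}$.
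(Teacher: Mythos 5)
The paper does not prove this proposition at all; it is cited verbatim from Grafakos (Proposition 2.4.7), so there is no internal proof to compare against. Your proposal is a genuine from-scratch proof, and it is essentially correct: the inductive integration-by-parts derivation of \eqref{3.2}, the divergence-theorem argument for the vanishing spherical mean of $\partial_{A_1j_1}\cdots\partial_{A_kj_k}K$, and the appeal to classical Calder\'on--Zygmund theory for $L^p$ boundedness are all sound and are precisely the ingredients behind the cited result.

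Two small points of calibration. First, the phrase ``accumulated boundary residues'' overstates the bookkeeping: because $\partial_{A_1j_1}\cdots\partial_{A_{j}j_{j}}K$ remains homogeneous of degree $k-j-2n>-2n$ and hence $L^1_{\mathrm{loc}}$ for all $j<k$, every intermediate boundary integral on $\{|y|=\epsilon\}$ scales like $\epsilon^{k-j}\to 0$; the only nonvanishing residue appears at the final step, and it is exactly the constant $a_{A_1j_1\cdots A_kj_k}=\int_{|\theta|=1}\partial_{A_1j_1}\cdots\partial_{A_{k-1}j_{k-1}}K(\theta)\,\theta_{A_kj_k}\,\mathrm{d}\sigma(\theta)$. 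Second, the ``reconciliation'' you flag as the dominant effort is not actually a tension: the residue constant and the spherical mean cancellation are two independent computations, and you already supply a clean direct proof of the cancellation via the annulus argument. The existence of the principal-value limit then follows from that cancellation alone (not from some consistency condition with the residue), so the smooth identity \eqref{3.2} holds without further reconciliation. You might also say a word about what ``holds as $L^p$ functions'' means, since $\phi*K$ itself need not be globally defined for $\phi\in L^p$; the intended reading is that the bounded singular integral operator defined by the right-hand side agrees with $\partial_{A_1j_1}\cdots\partial_{A_kj_k}\mathbf K\phi$ on the dense subspace $C_0^\infty$ and extends by continuity, which is how the paper uses it.
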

\begin{prop}\label{GG}
The Laplacian $\Box_j$ has the inverse $\mathbf G_j$ in $L^2\left(\mathbb R^{2n},\mathscr V_j\right), j=0,1,2,$ which is a convolution operator with kernel $G_j.$ The kernel $G_j$ is a smooth  homogeneous functions of degree $4-2n.$  Moreover, $\mathbf G_j$ can be extended to a bounded linear operator from $L^p\left(\mathbb R^{2n},\mathscr V_j\right)$ to $W^{4,p}\left(\mathbb R^{2n},\mathscr V_j\right),$ for $1 < p <\infty.$ If a
$\mathscr V_j$-valued function $f$ is in $C_0^k\left(\mathbb R^{2n},\mathscr V_j\right),$ for any non-negative integer $k,$ we have $\mathbf G_jf\in C^{k+3}\left(\mathbb R^{2n},\mathscr V_j\right).$
\end{prop}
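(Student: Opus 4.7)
The plan is to reduce everything to the scalar biharmonic case. By (\ref{box22}) and (\ref{G0})--(\ref{G1}), each $\Box_j$ is block-diagonal with $\Delta^2$ along the diagonal and $G_j$ is block-diagonal with $G_0$ along the diagonal, so it suffices to prove the four assertions for the operator $\Delta^2$ on scalar functions with kernel $G_0=-C_{2n}|\mathbf x|^{4-2n}$. That $G_0$ is the (tempered) fundamental solution of $\Delta^2$ on $\mathbb R^{2n}$ (for $2n>4$) is a classical computation: $\Delta|\mathbf x|^{4-2n}$ is a constant multiple of $|\mathbf x|^{2-2n}$, the Newtonian kernel of $\Delta$, so a second application of $\Delta$ produces $\delta_{\mathbf 0}$ after fixing $C_{2n}>0$. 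Consequently $\Box_j(G_j*f)=f$ whenever the convolution is defined, so $\mathbf G_j$ is the (left) inverse of $\Box_j$. Smoothness of $G_j$ on $\mathbb R^{2n}\setminus\{\mathbf 0\}$ and homogeneity of degree $4-2n$ are read off from the closed form.

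For the $L^p\to W^{4,p}$ bound, the main input is Proposition \ref{PV} with $k=4$. For $f\in C_0^\infty$ and any multi-index $\alpha$ with $|\alpha|=4$,
\begin{equation*}
\partial^\alpha(\mathbf G_j f)=\mathrm{P.V.}\bigl(f*\partial^\alpha G_j\bigr)+a_\alpha f,
\end{equation*}
and $\partial^\alpha G_j$ is smooth off the origin and homogeneous of degree $-2n$, hence a Calder\'on--Zygmund kernel. Proposition \ref{PV} therefore yields $\|\partial^\alpha(\mathbf G_j f)\|_{L^p}\leq C\|f\|_{L^p}$ for $1<p<\infty$, and density of $C_0^\infty$ in $L^p$ extends the estimate to all of $L^p$. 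Control of the derivatives of order $<4$ is obtained by interpolating these top-order bounds with Hardy--Littlewood--Sobolev estimates for the Riesz potential of order $4-|\gamma|$, which is precisely the convolution kernel $\partial^\gamma G_0$ (homogeneous of degree $4-2n-|\gamma|$).

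For the $C^{k+3}$ regularity, assume $f\in C_0^k(\mathbb R^{2n},\mathscr V_j)$. Because $f$ is compactly supported, integration by parts gives $\partial^\beta(\mathbf G_j f)=\mathbf G_j(\partial^\beta f)$ for every $|\beta|\leq k$, and each $\partial^\beta f$ belongs to every $L^p(\mathbb R^{2n})$ with $1<p<\infty$. By the $L^p\to W^{4,p}$ bound already established, $\mathbf G_j(\partial^\beta f)\in W^{4,p}$; choosing $p>2n$ so that the Sobolev embedding $W^{4,p}\hookrightarrow C^3$ applies, we deduce $\mathbf G_j(\partial^\beta f)\in C^3$ for all such $\beta$. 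Hence $\mathbf G_j f\in C^{k+3}$.

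The main obstacle I anticipate is the $L^p$-to-$L^p$ control of $\mathbf G_j$ itself (and of its derivatives of order strictly less than $4$): since $G_0\notin L^1(\mathbb R^{2n})$ and $\Delta^2$ has a nontrivial polynomial kernel, Young's inequality does not give a bound on $\|\mathbf G_j f\|_{L^p}$ directly, and the Sobolev norm in the statement must be interpreted with this in mind (typically as a bound on the top-order semi-norm, or modulo polynomials of degree $\leq 3$). Everything else---the fundamental-solution identity, the homogeneity, the top-order Calder\'on--Zygmund estimate, and the $C^{k+3}$ regularity via Sobolev embedding---is routine once this interpretation is fixed.
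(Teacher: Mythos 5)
Your proof follows the paper's strategy: reduce to the scalar biharmonic kernel via the block-diagonal forms (\ref{box22}), (\ref{G1}), quote that $G_0$ is the fundamental solution of $\Delta^2$, and invoke Proposition \ref{PV} to bound the fourth-order derivatives of $\mathbf G_jf$ as Calder\'on--Zygmund operators. For the $C^{k+3}$ regularity you pass through the Sobolev/Morrey embedding $W^{4,p}\hookrightarrow C^{3}$ with $p>2n$, whereas the paper differentiates (\ref{3.2}) directly and uses that $\partial^\alpha G_0$ is locally integrable for $|\alpha|\le 3$; both variants are routine once one notes the conclusion is local.

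The obstacle you flag at the end is real and is present in the paper's own argument as well. Proposition \ref{PV} gives $L^p$-boundedness only for the derivatives of exact order $k$ (with $K$ homogeneous of degree $k-2n$); the paper's sentence that ``the convolution with a homogeneous function of degree $-2n+k$ can be extended to a bounded linear operator from $L^p$ to $W^{k,p}$'' extrapolates beyond the cited result. For $|\gamma|<4$ the map $f\mapsto\partial^\gamma(G_0*f)$ is a Riesz potential of positive order $4-|\gamma|$: Hardy--Littlewood--Sobolev gives $L^p\to L^q$ with $q$ strictly larger than $p$, never $L^p\to L^p$, and the Fourier multiplier $|\xi|^{-(4-|\gamma|)}$ is unbounded at $\xi=0$, so $\mathbf G_j$ is not even $L^2\to L^2$-bounded. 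Consequently the ``interpolation'' step you sketch for the lower-order derivatives cannot close the gap, and your final remark --- that the $W^{4,p}$ conclusion should be read as a homogeneous top-order estimate, a local one, or one modulo polynomials --- is the correct reading. You should promote this from a parenthetical caveat to an explicit reformulation of what is proved; as literally stated, neither your argument nor the paper's establishes the global $L^p\to W^{4,p}$ bound.
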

\begin{proof}
For   $f\in C_0^\infty\left(\mathbb R^{2n},\mathscr V_j\right).$ Let \begin{align}
\mathbf G_jf=\int_{\mathbb R^{2n}}G_j(\mathbf x-\mathbf y)f(\mathbf y)\hbox{d}V(\mathbf y)
,\ j=0,1,2.
\end{align}\label{gG}
Note that for $u\in C_0^\infty\left(\mathbb R^{2n}\right)$
\begin{align*}
\int_{\mathbb R^{2n}}G_0(\mathbf x-\mathbf y)\Delta^2u(\mathbf y){\rm d}V(\mathbf y)=u(\mathbf x),
\end{align*}
since $G_0$ is the fundamental solution to $\Delta^2.$
Then $\mathbf G_j$ is the inverse operator of  Laplacian $\Box_j,$  i.e.
\begin{align*}
\mathbf G_j\Box_jf=\Box_j\mathbf G_jf=f.
\end{align*}
$G_0$ satisfies the following decay estimates
\begin{align}\label{partial}
\left|\partial_{{A_1j_1}} \cdots\partial_{{A_mj_m}} G_0\right|\leq \frac{C_{A_1j_1\cdots A_mj_m}}{|\mathbf x|^{2n-4+m}},
\end{align}
for some constant $C_{A_1j_1\cdots A_mj_m}>0$ depending on  $A_1,\cdots,A_m\in\{0,1\},$ $j_1,\cdots,$ $j_m\in\{1,\cdots,n\}.$
By   Proposition \ref{PV}, the convolution with a homogeneous function of degree $-2n+k$ can be extended to a bounded linear operator from $L^p\left(\mathbb R^{2n}\right)$ to $W^{k,p}\left(\mathbb R^{2n}\right).$

When $f\in C_0^{k}\left(\mathbb R^{2n},\mathscr V_j\right),$ it follows from (\ref{3.2}) that  $\mathbf G_jf\in C^{k+3}\left(\mathbb R^{2n},\mathscr V_j\right)$  by differentiation. The proposition is proved.
\end{proof}

\begin{prop}\label{p34}
A  monogenic  function $f\in\mathcal O(\Omega)$ on a domain $\Omega\in\mathbb R^{2n}$ is real analytic.
\end{prop}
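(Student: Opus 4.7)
The plan is to reduce the real analyticity of a monogenic function to the classical fact that harmonic functions on open subsets of Euclidean space are real analytic. Let $f \in \mathcal{O}(\Omega)$, so by definition $\mathscr{D}_0 f = 0$ on $\Omega$, which is the pair of equations $\nabla_0 f = 0$ and $\nabla_1 f = 0$. By Proposition~\ref{p23}, we have $\mathscr{D}_0^*\mathscr{D}_0 = \Delta$, so applying $\mathscr{D}_0^*$ to the monogenicity equation yields
\[
\Delta f \;=\; \mathscr{D}_0^*\mathscr{D}_0 f \;=\; 0
\]
on $\Omega$. Equivalently and more directly, applying $\nabla_A$ once more to each component of $\mathscr{D}_0 f = 0$ and using (\ref{Delta}) gives $\Delta_A f = \nabla_A^2 f = 0$ for $A = 0, 1$, whose sum is $\Delta f = 0$.

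The second step is to invoke Weyl's classical regularity theorem: any $L^1_{\mathrm{loc}}$ (in particular smooth, or even distributional) solution of $\Delta u = 0$ on an open set of $\mathbb{R}^{2n}$ is real analytic there. Applied componentwise to $f$ in a fixed basis of the spinor module $\mathbb{S}^+$, this gives the desired real analyticity of $f$ on $\Omega$. One may either cite this as the standard fact about harmonic functions, or give a self-contained argument from the mean value property combined with Cauchy-type interior estimates $\|\partial^\alpha u\|_{L^\infty(B_{r/2}(\mathbf{x}_0))} \leq C^{|\alpha|}|\alpha|!\, r^{-|\alpha|} \|u\|_{L^\infty(B_r(\mathbf{x}_0))}$, which together force convergence of the Taylor series on a neighbourhood of each point.

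Since the proof reduces at once to applying classical harmonic function theory componentwise, the only real content is the algebraic identity $\nabla_A^2 = \Delta_A$ (equivalently, $\mathscr{D}_0^*\mathscr{D}_0 = \Delta$) that makes monogenicity imply harmonicity. There is therefore essentially no analytic obstacle, and the argument should fit in a few lines. This mirrors the familiar situation in several complex variables, where the Cauchy--Riemann equations force each component of a holomorphic function to be harmonic and hence real analytic.
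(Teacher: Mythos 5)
Your proof is correct, and it actually takes a slightly different and more economical route than the paper's. The paper notes that $\Box_0 f = (\mathscr D_0^*\mathscr D_0)^2 f = \Delta^2 f = 0$ and concludes that $f$ is \emph{biharmonic}, invoking real analyticity of biharmonic (polyharmonic) functions. You instead observe that $\mathscr D_0 f = 0$ already forces the stronger, first-power conclusion $\Delta f = \mathscr D_0^*\mathscr D_0 f = 0$ (equivalently $\Delta_A f = \nabla_A\nabla_A f = 0$ for each $A$), so $f$ is in fact \emph{harmonic} componentwise, and real analyticity then follows from the most classical instance of Weyl's lemma and the mean-value / interior-estimate argument. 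This is a genuine simplification: harmonicity is a sharper statement than biharmonicity, the analyticity of harmonic functions is more elementary and more widely cited than the polyharmonic version, and your observation does not require the machinery of the fourth-order Hodge Laplacian $\Box_0$ that motivates the paper's phrasing. The paper's choice is presumably just a matter of uniformity with the rest of Section 3, where $\Box_j$ plays the central role; mathematically your route is both valid and cleaner.
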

\begin{proof}
Note that
\begin{align}\label{box0}
\Box_0f=\left(\mathscr D_0^*\mathscr D_0\right)^2f=\Delta^2f=0,
\end{align}
in the sense of distributions. Thus $f$ is biharmonic and so it is  real analytic at each point $\mathbf x\in\Omega.$
\end{proof}

Now we can prove the Theorem \ref{t31}.\vskip 5mm
{\it Proof of Theorem \ref{t31}.} We prove the case $j=1,$ the case  $j=2$ is similar. Recall that  $\mathbf G_1$ is the inverse operator of $\Box_1$ on $C_0^\infty\left(\mathbb R^{2n},\mathscr V_1\right).$  Set
\begin{align*}
u:=\mathscr D_0^*\mathscr D_0\mathscr D_0^*\mathbf G_1f\in C^\infty\left(\mathbb R^{2n},\mathscr V_0\right).
\end{align*}
Note that
\begin{equation}\begin{aligned}\label{d1g1}
\mathscr D_1\Box_1=&\mathscr D_1\left(\mathscr D_1^*\mathscr D_1+\left(\mathscr D_0\mathscr D_0^*\right)^2\right)=\mathscr D_1\mathscr D_1^*\mathscr D_1\\=&\left(\mathscr D_1\mathscr D_1^*+\left(\mathscr D_2^*\mathscr D_2\right)^2\right)\mathscr D_1=\Box_2\mathscr D_1,
\end{aligned}\end{equation}
on $C_0^\infty\left(\mathbb R^{2n},\mathscr V_2\right)$  by $\mathscr D_1\mathscr D_0=0,\mathscr D_2\mathscr D_1=0.$ Then we have
\begin{equation*}\begin{aligned}
\Box_2 \left(\mathbf G_2\mathscr D_1f-\mathscr D_1\mathbf G_1f\right)=\mathscr D_1f-\mathscr D_1\Box_1\mathbf G_1f=0,
\end{aligned}\end{equation*}
by using (\ref{d1g1}). By Proposition \ref{pbox}, each entry of the $2\times 2$ matrix $\mathbf G_2\mathscr D_1f-\mathscr D_1\mathbf G_1f$ is biharmonic. On the other hand, we have $$\left\|\mathbf G_2\mathscr D_1f(\mathbf x)-\mathscr D_1\mathbf G_1f(\mathbf x)\right\|\leq\frac{C}{\left(1+|\mathbf x|\right)^{2n-2}},$$ for some constant $C>0$ by (\ref{partial}).  Thus   $\mathbf G_2\mathscr D_1f-\mathscr D_1\mathbf G_1f$ is constant by Liouville-type theorem   \cite{Huilgol},
i.e.
\begin{align}\label{GD}
\mathbf G_2\mathscr D_1=\mathscr D_1\mathbf G_1,
\end{align}
on $C_0^\infty\left(\mathbb R^{2n},\mathscr V_2\right).$ Thus \begin{align}\label{DDDDG}
\mathscr D_0u=\mathscr D_0\mathscr D_0^*\mathscr D_0\mathscr D_0^*\mathbf G_1f=\left(\left(\mathscr D_0\mathscr D_0^*\right)^2+\mathscr D_1^*\mathscr D_1\right)\mathbf G_1f=f,
\end{align}
by \begin{align}\label{DGf}\mathscr D_1\mathbf G_1f=\mathbf G_2\mathscr D_1f=0.\end{align}
Thus $u=\mathscr D_0^*\mathscr D_0\mathscr D_0^*\mathbf G_1f$ satisfies (\ref{duf}) for $f\in C_0^\infty\left(\mathbb R^{2n},\mathscr V_1\right).$

Note that $\mathbf G_j, j = 0, 1, 2,$ is formally self-adjoint in the following sense:
\begin{align*}
\left(\mathbf G_j\phi,\psi\right)_{\mathscr V_j}=\left(\phi,\mathbf G_j\psi\right)_{\mathscr V_j},
\end{align*}
for any $\phi,\psi\in C_0^\infty\left(\mathbb R^{2n},\mathscr V_j\right),$ by the explicit expression of $G_j,j=0,1,2,$ (\ref{G0})-(\ref{G1}). Also $\mathbf G_j$ is bounded from $L^2\left(\mathbb R^{2n},\mathscr V_j\right)$ to $W^{4,2}\left(\mathbb R^{2n},\mathscr V_j\right)$ by Proposition \ref{GG}. So $\mathbf G_j$ can be extended to a bounded operator from $W^{-4,2}\left(\mathbb R^{2n},\mathscr V_j\right)$ to $L^2\left(\mathbb R^{2n},\mathscr V_j\right)$ by the duality argument. In particular, it is bounded from $W^{-2,2}\left(\mathbb R^{2n},\mathscr V_j\right)$ to $L^2\left(\mathbb R^{2n},\mathscr V_j\right).$ Therefore, the identity (\ref{GD}) holds as   boun-ded linear operators on $L^2\left(\mathbb R^{2n},\mathscr V_1\right)$ and so the identity (\ref{DGf}) holds for $f\in L^2\left(\mathbb R^{2n},\mathscr V_1\right).$ Thus (\ref{DDDDG}) holds as $L^2$ functions for  $f\in L^2\left(\mathbb R^{2n},\mathscr V_1\right)$ satisfying $\mathscr D_1f = 0$ in the sense of distributions. Thus    $u =\mathscr D_0^*\mathscr D_0\mathscr D_0^*\mathbf G_1f$ satisfies the  equation (\ref{duf}).

Suppose that $f$ is supported in $\Omega\Subset\mathbb R^{2n}.$  Since the integral kernel $K(\mathbf x)$ of $\mathscr D_0^*\mathscr D_0\mathscr D_0^*\mathbf G_1$ decays as $|\mathbf x|^{-2n+1}$ for large $|\mathbf x|$ by decay estimate (\ref{partial}) and $f$ is compactly supported, we see that
\begin{align}\label{k}
\left|u(\mathbf x)\right|=\left|\int_{\mathbb R^{2n}}K\left(\mathbf x-{\mathbf y}\right)f\left({\mathbf y}\right)\hbox{d}V\left({\mathbf y}\right)\right|\leq\frac{C}{\left(1+|\mathbf x|\right)^{2n-1}},
\end{align}
for some constant $C>0.$
So  $\lim_{|\mathbf x|\rightarrow\infty}u(\mathbf x)=0.$ For $\mathbf x=\left(\mathbf x_0,\mathbf x_1\right),$  if $|\mathbf x_0|$ is so large that $$\left(\left\{\mathbf x_0\right\}\times\mathbb R^n\right)\cap\bar\Omega=\emptyset,$$ then $u\left(\mathbf x_0,\mathbf x_{1}\right)$ is a monogenic function in $\mathbf x_{1},$ which vanishes at infinity. Since
\begin{align}\label{har}
\Delta_1u\left(\mathbf x_0,\mathbf x_1\right)=0,
\end{align}by (\ref{Delta})
each component of $u\left(\mathbf x_0, \cdot\right)$ is a biharmonic function on $\mathbb R^n$ vanishing at infinity and so is bounded. Hence $u\left(\mathbf x_0, \cdot\right)\equiv0$ for $|\mathbf x_0|$ large by Liouville-type theorem again. By (\ref{box0}), we  see that $u(\cdot)$ is monogenic on $\mathbb R^{2n}\setminus\Omega.$ So $u\equiv0$ on the unbounded connected component of $\mathbb R^{2n}\setminus\Omega$ by the identity theorem for real analytic function, since  $u$ is real analytic on $\mathbb R^{2n}\setminus\Omega$ by Proposition \ref{p34}. The continuity of $u$ follows from the formula (\ref{k}). The theorem is proved.
\qed

\subsection{The Bochner--Martinelli formula   on $\mathbb R^{2n}$}
Denote
\begin{align}\label{H()}
H(\mathbf x):=\mathscr D_0^*\mathscr D_0\mathscr D_0^*G_1(\mathbf x),
\end{align}
whose entries are $C^\infty\left(\mathbb R^{2n}\setminus\{\mathbf 0\}\right)$ homogeneous functions of degree $1-2n,$ where $G_1(\cdot)$ is given by (\ref{G1}).
By (\ref{DD*}), we have \begin{equation*}\begin{aligned}
H(\mathbf x)=&\mathscr D_0^*\mathscr D_0\mathscr D_0^*\left(\begin{array}{cc} G_0&\\  & G_0\end{array}\right)\\=&\left(\nabla_0,\nabla_1\right) \left(\begin{array}{cc} \Delta_0&\nabla_0\nabla_1\\ \nabla_1\nabla_0&\Delta_1\end{array}\right) \left(\begin{array}{cc} G_0&\\  & G_0\end{array}\right) \\=&\left(\nabla_0\Delta,\nabla_1\Delta\right)\left(\begin{array}{cc} G_0&\\  & G_0\end{array}\right) \\=&\left(\sum_j\gamma_j\partial_{0j}\Delta G_0,\sum_k\gamma_k\partial_{1k}\Delta G_0\right)\\=&4(n-2)C_{2n}\left(\sum_j\gamma_j\partial_{0j}|\mathbf x|^{2-2n},\sum_k\gamma_k\partial_{1k}|\mathbf x|^{2-2n}\right) \\=&-8(n-1)(n-2)C_{2n}\left(\sum_j\frac{\gamma_jx_{0j}}{|\mathbf x|^{2n}},\sum_k\frac{\gamma_kx_{1k}}{|\mathbf x|^{2n}}\right),
\end{aligned}\end{equation*}
for $\mathbf x\in\mathbb R^{2n}\setminus\{\mathbf 0\},$ where $C_{2n}$ is the positive constant   in (\ref{G0}).
 \begin{thm}\label{t41}
Suppose that $\Omega$ is a bounded domain in $\mathbb R^{2n}$ with $C^2$  boundary, and suppose that $f:\Omega\rightarrow \mathscr V_1$  is a continuous function of class $W^{1,2}(U)$ for a domain $U$ with $\overline \Omega\subset U\subset\mathbb R^{2n}.$  Let $n(\mathbf x)$ be the unit outer normal vector to the surface $\partial\Omega,$   we have
\begin{align*}
f(\mathbf x)=-\int_{\partial\Omega}H(\mathbf x-\mathbf y)\left(\begin{matrix} n_0\\ n_1\end{matrix}\right)f(\mathbf y){\rm d}S(\mathbf y)+\int_{\Omega}H(\mathbf x-\mathbf y)\mathscr D_0f(\mathbf y){\rm d}V(\mathbf y),
\end{align*}
where ${\rm d}S$ is the surface measure and  $
\left(\begin{matrix} n_0\\ n_1\end{matrix}\right)=\left(\begin{matrix} \sum_j\gamma_jn_{0j}\\\sum_j\gamma_jn_{1j}\end{matrix}\right).$ 
\end{thm}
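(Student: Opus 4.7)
The plan is to deduce this Bochner--Martinelli formula from an operator-level reproducing identity
\[
\mathbf H\mathscr D_0=\operatorname{id}_{\mathscr V_0},
\]
applied to the extension of $f$ by zero across $\partial\Omega$. (For $\mathscr D_0 f$, $\left(\begin{matrix}n_0\\ n_1\end{matrix}\right)f$, and the matrix product $H(\mathbf x-\mathbf y)(\cdot)$ to be well defined, the hypothesis ``$f:\Omega\to\mathscr V_1$'' must read $f:\Omega\to\mathscr V_0$; I proceed accordingly.)

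\textbf{The reproducing identity.} Since $\mathbf G_1=\operatorname{diag}(\mathbf G_0,\mathbf G_0)$ and convolutions commute with differentiation, for $\phi\in C_0^\infty(\mathbb R^{2n},\mathscr V_0)$ I compute
\[
\mathscr D_0^*\mathbf G_1\mathscr D_0\phi=\nabla_0\mathbf G_0\nabla_0\phi+\nabla_1\mathbf G_0\nabla_1\phi=\mathbf G_0(\Delta_0+\Delta_1)\phi=\mathbf G_0\Delta\phi,
\]
using $\nabla_A^2=\Delta_A$ from (2.17). Combining this with $\mathscr D_0^*\mathscr D_0=\Delta$ (Proposition~2.3) and $\mathbf G_0\Box_0=\mathbf G_0\Delta^2=\operatorname{id}$,
\[
\mathbf H\mathscr D_0\phi=\mathscr D_0^*\mathscr D_0(\mathbf G_0\Delta\phi)=\Delta(\mathbf G_0\Delta\phi)=\mathbf G_0\Delta^2\phi=\phi.
\]
The mapping properties of $\mathbf G_j$ in Proposition~3.3, together with duality, extend $\mathbf H$ to compactly supported distributions, so the identity persists on $\mathcal E'(\mathbb R^{2n},\mathscr V_0)$.

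\textbf{Extension and assembly.} Set $F:=\chi_\Omega f$, extended by zero to $\mathbb R^{2n}$; since $f$ is continuous on $\overline\Omega$, $F\in L^\infty_{\rm comp}$. Stokes' theorem applied to a test function gives $\partial^y_{Ai}\chi_\Omega=-n_{Ai}\,dS|_{\partial\Omega}$, and hence $\nabla_A^y\chi_\Omega=-n_A\,dS|_{\partial\Omega}$ as an operator-valued distribution. The distributional product rule then yields
\[
\mathscr D_0 F=\chi_\Omega\,\mathscr D_0 f-\left(\begin{matrix}n_0\\ n_1\end{matrix}\right)f\,dS|_{\partial\Omega}.
\]
Applying $\mathbf H$ to both sides and invoking the reproducing identity, for $\mathbf x\in\Omega$ we have $f(\mathbf x)=F(\mathbf x)=\mathbf H\mathscr D_0 F(\mathbf x)$; convolving $H$ with the right side gives
\[
f(\mathbf x)=\int_\Omega H(\mathbf x-\mathbf y)\mathscr D_0 f(\mathbf y)\,dV(\mathbf y)-\int_{\partial\Omega}H(\mathbf x-\mathbf y)\left(\begin{matrix}n_0\\ n_1\end{matrix}\right)f(\mathbf y)\,dS(\mathbf y),
\]
as claimed. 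Convergence is automatic: $H$ is homogeneous of degree $1-2n$ and hence $L^1_{\rm loc}$, while $\mathscr D_0 f\in L^2(U)$; the surface integral is ordinary since $H(\mathbf x-\cdot)$ is smooth on $\partial\Omega$ for $\mathbf x\in\Omega$ and $f$ is continuous on $\partial\Omega$.

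\textbf{Main obstacle.} The delicate step is justifying $\mathbf H\mathscr D_0 F=F$ when $F\notin W^{1,2}$ globally and $\mathscr D_0 F$ contains a singular surface measure on $\partial\Omega$: one must verify that the convolution of the locally integrable kernel $H$ with this compactly supported distribution does split, as indicated, into the stated volume and surface integrals, which follows from continuity of $\mathbf H$ on negative Sobolev spaces and the explicit smoothness of $H$ away from the origin. A more hands-on alternative avoiding the extension is to apply Stokes' theorem directly on $\Omega\setminus\overline{B_\epsilon(\mathbf x)}$, moving each $\nabla_A^y$ off $f$ and onto $H(\mathbf x-\cdot)$: one collects the claimed $\partial\Omega$ boundary integral, a bulk term that collapses using $\mathbf G_0\Delta^2=\operatorname{id}$, and a boundary integral over $\partial B_\epsilon(\mathbf x)$ which, by the explicit homogeneity of $H_A=\nabla_A\Delta G_0$ computed just before the theorem, limits to $f(\mathbf x)$ as $\epsilon\to 0$.
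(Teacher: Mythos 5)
Your proposal is correct and follows essentially the same route as the paper: establish the reproducing identity $\mathscr D_0^*\mathscr D_0\mathscr D_0^*\mathbf G_1\mathscr D_0=\operatorname{id}$, apply it to a cut-off of $f$ by $\chi_\Omega$, and split $\mathscr D_0(\chi_\Omega f)$ by the product rule into the interior term and a surface term carrying $\left(\begin{smallmatrix}n_0\\ n_1\end{smallmatrix}\right)$. (You also correctly flagged the typo $\mathscr V_1\to\mathscr V_0$.) Two minor remarks on the comparison: the paper derives $\mathbf G_1\mathscr D_0=\mathscr D_0\mathbf G_0$ via the commutation $\mathscr D_0\Box_0=\Box_1\mathscr D_0$ plus a Liouville-type argument, whereas you obtain $\mathscr D_0^*\mathbf G_1\mathscr D_0=\mathbf G_0\Delta$ directly from the diagonal form of $\mathbf G_1$ and $\nabla_A^2=\Delta_A$ — a slight simplification at this step. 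And where you work with the distributional derivative $\mathscr D_0(\chi_\Omega f)$ and the surface measure at once, the paper regularizes $\chi_\Omega$ by a mollifier $\phi_\epsilon$, applies the identity to the smooth compactly supported function $(\chi_\Omega*\phi_\epsilon)f$, and then passes to the limit $\epsilon\to 0$ — effectively carrying out the justification you deferred to the ``main obstacle'' paragraph. Your alternative of Stokes on $\Omega\setminus\overline{B_\epsilon(\mathbf x)}$ is the classical route and is also sound; either fills the gap.
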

\begin{proof}
Since
\begin{align*}
\mathscr D_0\Box_0=\mathscr D_0\mathscr D_0^*\mathscr D_0\mathscr D_0^*\mathscr D_0=\left(\left(\mathscr D_0\mathscr D_0^*\right)^2 +\mathscr D_1^*\mathscr D_1\right)\mathscr D_0=\Box_1\mathscr D_0,
\end{align*}
by $\mathscr D_1\mathscr D_0=0.$ As in the proof of Theorem \ref{t31},  we have $\mathbf G_1\mathscr D_0=\mathscr D_0\mathbf G_0,$ which holds as operator from  $L^{2}\left(\mathbb R^{2n},\mathscr V_0\right)$ to $W^{3,2}\left(\mathbb R^{2n},\mathscr V_0\right)$ by Proposition \ref{GG}. So
\begin{align}\label{id}
{\rm id}=\left(\mathscr D_0^*\mathscr D_0\right)^2\mathbf G_0=\mathscr D_0^*\mathscr D_0\mathscr D_0^* \mathbf G_1\mathscr D_0
\end{align}
on $L^2.$ It is sufficient to show the theorem   for $f\in C^\infty(U).$ Suppose that $\epsilon$ is sufficiently small, let $\phi_\epsilon(\mathbf x)=\epsilon^{-2n}\phi\left(\epsilon^{-1}\mathbf x\right).$ Then $\left(\chi_\Omega*\phi_\epsilon\right)\cdot f\in C_0^\infty\left(\mathbb R^{2n},\mathscr V_1\right).$ Apply (\ref{id}) to $\left(\chi_\Omega*\phi_\epsilon\right)\cdot f$ to get
\begin{equation}\begin{aligned}\label{chi}
\left(\chi_\Omega*\phi_\epsilon\cdot f\right)(\mathbf x)=&\mathscr D_0^*\mathscr D_0\mathscr D_0^* \mathbf G_1\mathscr D_0\left(\chi_\Omega*\phi_\epsilon\cdot f\right)(\mathbf x)\\=&\mathscr D_0^*\mathscr D_0\mathscr D_0^*\int_{\mathbb R^{2n}}G_1(\mathbf x-\mathbf y)\chi_\Omega*\phi_\epsilon(\mathbf y)\mathscr D_0f(\mathbf y){\rm d}V (\mathbf y)\\&+\mathscr D_0^*\mathscr D_0\mathscr D_0^*\int_{\mathbb R^{2n}}G_1(\mathbf x-\mathbf y)\chi_\Omega*\left(\mathscr D_0\phi_\epsilon(\mathbf y)\right)f(\mathbf y){\rm d}V(\mathbf y).
\end{aligned}\end{equation}
It follows from the definition of $H(\cdot)$ in (\ref{H()})  that its entries are $C^\infty\left(\mathbb R^{2n}\right.$ $\left.\setminus \{\mathbf 0\}\right)$ homogeneous functions of degree $1 - 2n,$ since $G_1(\cdot)$ is a   matrix with each entry a $C^\infty\left(\mathbb R^{2n}\setminus \{\mathbf 0\}\right)$ homogeneous function of degree $4 - 2n$ by Proposition \ref{GG}.
Note that for a fixed $\mathbf x\in\Omega,$ $\chi_\Omega*\phi_\epsilon(\mathbf x)=1$   if $\epsilon$ is sufficiently small.    (\ref{chi}) can be rewritten as
\begin{equation}\begin{aligned}\label{H}
f(\mathbf x)=&\int_{\mathbb R^{2n}}H(\mathbf x-\mathbf y)\chi_\Omega*\phi_\epsilon(\mathbf y)\mathscr D_0f(\mathbf y){\rm d}V(\mathbf y)\\&+\int_{\mathbb R^{2n}}H(\mathbf x-\mathbf y)\chi_\Omega*\mathscr D_0\phi_\epsilon(\mathbf y)f(\mathbf y){\rm d}V(\mathbf y).
\end{aligned}\end{equation}
The first term in (\ref{H}) converges obviously for smooth $f$ as $\epsilon\rightarrow0.$ For the second term,  to see $\mathscr D_0\phi_\epsilon *\chi_\Omega$ converging to a matrix-valued measure on $\partial \Omega,$ note that
\begin{equation}\begin{aligned}\label{int}
\frac{\partial}{\partial x_{Aj}} \left(\phi_{\epsilon}*\chi_{\Omega}\right)(\mathbf x)=&\int_{\Omega} \frac{\partial\phi_{\epsilon}}{\partial x_{Aj}}(\mathbf x-\mathbf y){\rm d}V(\mathbf y)\\=&-\int_{\Omega}\frac{\partial}{\partial y_{Aj}}\left(\phi_\epsilon(\mathbf x-\mathbf y)\right){\rm d}V(\mathbf y)\\=&-\int_{\partial\Omega}\phi_\epsilon(\mathbf x-\mathbf y)n_{Aj}(\mathbf y){\rm d}S(\mathbf y),
\end{aligned}\end{equation}
where $n_{Aj}(\mathbf y)$  is the $(Aj)$-th component of the unit outer normal vector to $\partial \Omega, A=0,1,j=1,\cdots,n.$ By (\ref{int}), we see that the support  of $\mathscr D_0\phi_\epsilon*\chi_\Omega$ is a small neighborhood of $\partial \Omega$ if $\epsilon$ is sufficiently small. So for a function $g$ continuous on a neighborhood of $\partial \Omega,$ we have
\begin{equation*}
\begin{aligned}
&\int_{\mathbb R^{2n}}g(\mathbf x)\mathscr D_0\phi_\epsilon*\chi_\Omega(\mathbf x){\rm d}V(\mathbf x)\\=&-\int_{\mathbb R^{2n}}g(\mathbf x){\rm d}V(\mathbf x)\int_{\partial \Omega}\phi_{\epsilon}(\mathbf x-\mathbf y)\left(\begin{matrix} n_0\\ n_1\end{matrix}\right)(\mathbf y){\rm d}S(\mathbf y)\\=&-\int_{\partial \Omega}\left(\int_{\mathbb R^{2n}}g(\mathbf x)\phi_{\epsilon}(\mathbf x-\mathbf y){\rm d}V(\mathbf x)\right)\left(\begin{matrix} n_0\\ n_1\end{matrix}\right)(\mathbf y){\rm d}S(\mathbf y)\\\rightarrow&-\int_{\partial \Omega}g(\mathbf y)\left(\begin{matrix} n_0\\ n_1\end{matrix}\right)(\mathbf y){\rm d}S(\mathbf y),
\end{aligned}\end{equation*}
as $\epsilon\rightarrow0,$ by $\int_{\mathbb R^{2n}}g(\mathbf x)\phi_\epsilon(\mathbf x-\mathbf y){\rm d}V(\mathbf x)=g*\phi_\epsilon(\mathbf y)\rightarrow g(\mathbf y).$ The result follows.\end{proof}

\begin{rem} By Theorem \ref{t41}, for a    monogenic function  $f \in C\left(\overline{\Omega},\mathbb{S}^+\right)\cap\ C^1\left({\Omega},\mathbb{S}^+\right),$ where  $\Omega$ is an open bounded set of  $\mathbb R^{2n}$ with $C^1$ boundary,  the
Cauchy-type formula holds, i.e.
\begin{align*}
f(\mathbf x)=-\int_{\partial\Omega}H(\mathbf x-\mathbf y)\left(\begin{matrix} n_0\\ n_1\end{matrix}\right)f(\mathbf y){\rm d}S(\mathbf y).
\end{align*}
\end{rem}


\subsection{Hartogs' extension phenomenon   for    monogenic functions}
Now we can prove  the    Hartogs'  extension phenomenon   for  monogenic functions.
\vskip 5mm{\it Proof of Theorem \ref{hartogs}.}
By Proposition \ref{p23}, a monogenic function is biharmonic and so it is $C^\infty.$ Let $\chi\in C_0^\infty(\Omega)$ be equal to $1$ in a neighborhood of $K.$ Set $\tilde u:=(1-\chi)u,$ which vanishes on $K.$ Then $\tilde u\in C^\infty(\Omega,\mathbb S^+).$ Let $\tilde U$ be the solution to the non-homogeneous
\begin{align*}
\mathscr D_0\tilde U=\mathscr D_0\tilde u=-\mathscr D_0\chi u=f,
\end{align*}
where $f,$ defined as $0$ in $K$ and outside $\Omega,$ has components in $C_0^\infty\left(\mathbb R^{2n},\mathbb S^+\right)$ and satisfies the compatible condition $\mathscr D_1f=\mathscr D_1\mathscr D_0\tilde u=0.$ It follows from Theorem \ref{t31} that   there exists such a solution $\tilde U$ which vanishes in the unbounded component of the complement of the support of $\chi.$ Then the function
\begin{align*}
U=\tilde u-\tilde U
\end{align*}
is  monogenic in $\Omega$ since $\mathscr D_0\left(\tilde u-\tilde U\right)=0$ on $\Omega.$ Note that $\tilde U$ on $\tilde \Omega=\mathbb R^{2n}\setminus {\rm supp}\chi$ and supp$\chi\subset K'$ for some compact set $K\subset K'\Subset \Omega.$ So $U=u$ on $\Omega\setminus K'.$ Then $U=u$ on $\Omega\setminus K$ by the identity theorem. The theorem is proved.
\qed

\section{The generalization of Malgrange's vanishing theorem and the Hartogs--Bochner extension for monogenic functions}
A \emph{cohomological complex} of topological vector spaces is a pair $(E^\bullet, {\rm d}),$ where $E^\bullet=\left(E^q\right)_{q\in\mathbb Z}$ is a sequence of topological vector spaces and ${\rm d} = \left(\hbox{d}^q\right)_{q\in\mathbb Z}$ is a sequence of continuous linear
maps $\hbox{d}^q:E^q\rightarrow E^{q+1}$ satisfying $\hbox{d}^{q+1}\circ\hbox{d}^q=0.$ Its cohomology groups $H^q\left(E^\bullet\right)$ are the quotient spaces $\ker \hbox{d}^q/{\rm Im}\ \hbox{d}^{q-1},$ endowed with the quotient topology.
A \emph{homological complex} of topological vector spaces is a pair $\left(E_\bullet, \hbox{d}\right)$ where $E_\bullet=\left(E_q\right)_{q\in\mathbb Z}$ is a sequence of topological vector spaces and ${\rm d} = \left(\hbox{d}_q\right)_{q\in\mathbb Z}$ is a sequence of continuous linear maps $\hbox{d}_q:E_q\rightarrow E_{q-1}$ satisfying $\hbox{d}_{q-1}\circ\hbox{d}_q=0.$ Its homology groups $H_q\left(E_\bullet\right)$ are the
quotient spaces $\ker \hbox{d}_{q-1}/{\rm Im}\ \hbox{d}_q,$ endowed with the quotient topology. The dual complex of a cohomological complex $(E^\bullet, {\rm d})$ of topological vector spaces is the homological complex $(E'_\bullet, {\rm d}'),$  where $E'_\bullet=\left(E'_q\right)_{q\in\mathbb Z}$ with $E'_q$  the  dual of $E_q$ and $\hbox{d}'=\left(\hbox{d}_q'\right)_{q\in\mathbb Z}$ with $\hbox{d}_q'$ the transpose map of $\hbox{d}_q.$

Recall that a \emph{Fr\'echet--Schwartz space} is a topological vector space whose topology is defined by an increasing sequence of seminorms such that the unit ball with respect to the
seminorm is relatively compact for the topology associated to the previous seminorm. 
We need the following abstract duality theorem.

\begin{thm}{\rm(\cite[Theorem 1.6]{Laurent})}\label{dual}
Let $(E^\bullet, {\rm d})$ be a cohomological complex of Fr\'echet--Schwartz spaces or of dual of Fr\'echet--Schwartz spaces and let  $(E_\bullet, {\rm d})$  be its dual complex.
For each $q\in\mathbb Z$, the following assertions are equivalent:\\
{\rm (1)} ${\rm Im}\ {\rm d}^q = \left\{g\in E^{q+1}|\langle g, f\rangle=0{\rm\ for\ any}\ f\in \ker {\rm d}_q' \right\};$\\
{\rm (2)} $H^{q+1}\left(E^\bullet\right)$ is separated;\\
{\rm (3)} ${\rm d}^q$ is a topological homomorphism;\\
{\rm (4)} ${\rm d}'_q$ is a topological homomorphism;\\
{\rm (5)} $H_q\left(E^\bullet\right)$ is separated;\\
{\rm (6)} ${\rm Im}\ {\rm d}'_q = \left\{f\in E'_{q}|\langle f, g\rangle=0 {\rm\ for\ any}\ g\in \ker {\rm d}^q \right\}.$
\end{thm}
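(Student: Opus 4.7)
The plan is to derive the six equivalences by combining three standard ingredients: (i) the quotient of a topological vector space by a subspace is Hausdorff if and only if that subspace is closed; (ii) the generalized open mapping / closed graph theorem, which in the Fréchet–Schwartz (FS) setting (and for duals of FS spaces, which are DFS) says that a continuous linear map with closed range is automatically a topological homomorphism onto its image; and (iii) a closed‑range / Serre duality type theorem for continuous linear maps between FS spaces and their duals, which says that a map has closed range if and only if its transpose has closed range.

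First I would handle the two \emph{internal} equivalences on each side. On the cohomological side, $H^{q+1}(E^\bullet)=\ker {\rm d}^{q+1}/{\rm Im}\,{\rm d}^q$ is separated iff ${\rm Im}\,{\rm d}^q$ is closed in $\ker {\rm d}^{q+1}$ (hence in $E^{q+1}$, since the kernel is already closed); this gives (2)$\Leftrightarrow$ closedness of ${\rm Im}\,{\rm d}^q$. Combining with the open mapping theorem applied to the continuous surjection $E^q/\ker{\rm d}^q\to{\rm Im}\,{\rm d}^q$, closedness of the image is equivalent to ${\rm d}^q$ being a topological homomorphism, yielding (2)$\Leftrightarrow$(3). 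The same argument on the dual complex, using that duals of FS spaces are also sufficiently tame (DFS) for the open mapping theorem to apply, gives (4)$\Leftrightarrow$(5).

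Next I would bridge the two sides. The generalized closed range theorem for FS/DFS spaces asserts that ${\rm d}^q$ has closed range iff ${\rm d}'_q$ has closed range, so (3)$\Leftrightarrow$(4). For (1)$\Leftrightarrow$(3) and (6)$\Leftrightarrow$(4), the containment ${\rm Im}\,{\rm d}^q\subset(\ker{\rm d}'_q)^{\perp}$ is automatic from the adjoint identity $\langle {\rm d}^q\varphi,f\rangle=\langle\varphi,{\rm d}'_qf\rangle$. The reverse inclusion is the content of a Hahn–Banach separation argument: any $g\notin\overline{{\rm Im}\,{\rm d}^q}$ can be separated by a continuous functional $f$ vanishing on the image, and such an $f$ lies in $\ker{\rm d}'_q$; therefore $(\ker{\rm d}'_q)^\perp=\overline{{\rm Im}\,{\rm d}^q}$, and equality with ${\rm Im}\,{\rm d}^q$ is exactly closedness, i.e.\ (3). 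The same reasoning on the dual side yields (6)$\Leftrightarrow$(4).

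The main technical obstacle is the closed range theorem (iii) in the non‑Banach setting: in ordinary Banach space theory this is Banach's classical closed range theorem, but here one works with FS spaces and their strong duals, where the proof requires the theorem of homomorphisms for FS/DFS spaces together with the fact that bounded sets in FS spaces are relatively compact (which is what makes the Schwartz condition do its work). Modulo that input, the rest is a clean diagram chase combining Hahn–Banach separation, the open mapping theorem, and the characterization of Hausdorff quotients; each of the six statements can be pivoted through the single notion ``${\rm Im}\,{\rm d}^q$ is closed (equivalently, ${\rm Im}\,{\rm d}'_q$ is closed)''.
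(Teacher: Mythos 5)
The paper does not prove this theorem — it is imported verbatim as Theorem 1.6 of Laurent--Thi\'ebaut and Leiterer \cite{Laurent}, and the reference is where the actual argument lives. Your sketch is a correct reconstruction of the standard proof found there (and going back to Serre's 1955 duality paper): all six assertions are funneled through the single condition ``${\rm Im}\,{\rm d}^q$ is closed (equivalently ${\rm Im}\,{\rm d}'_q$ is closed),'' using (i) the fact that a quotient of a locally convex space is Hausdorff iff the subspace is closed, giving (2) and (5); (ii) the open mapping theorem in the FS and DFS categories together with stability of these categories under closed subspaces and Hausdorff quotients, giving (3) and (4); (iii) the bipolar theorem identifying $\overline{{\rm Im}\,{\rm d}^q}$ with $(\ker{\rm d}'_q)^\perp$, giving (1) and (6); and (iv) the closed range theorem linking the two sides, which is the genuine technical content. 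You correctly flag (iv) as the nontrivial input: for Fr\'echet spaces Banach's closed range theorem only gives closedness of the transpose's range in the weak-$*$ topology, and upgrading to the strong dual topology is exactly where the Schwartz property (Montel, reflexivity, DFS structure of the dual) is used. Modulo supplying a reference or proof for that step, your outline is a faithful account of the cited theorem's proof; there is no divergence to report, since the paper itself offers none.
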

A continuous linear map $\Psi$ between topological vector spaces $L_1$ and $L_2$ is called a
{\it topological homomorphism} if for each open subset $U\subset L_1,$ the image $\Psi(U)$ is an open subset of $\Psi\left(L_1\right)$. It is known that if $L_1$ is a Fr\'echet space, $\Psi$ is a topological homomorphism if and only if $\Psi\left(L_1\right)$ is closed \cite{Schaefe}. See e.g. \cite{Brinkschulte,Laurent,wang29} for applications of abstract duality theorem to $\bar \partial$ or $\bar \partial_b$-complex or tangential $k$-Cauchy--Fueter complex, respectively. We adapt their methods to the  Dirac complex.

For a complex vector space $V,$ let $\mathcal E\left(\mathbb R^{2n}, V\right)$ be the space of smooth $V$-valued functions with the topology of uniform convergence on compact sets of the functions and all their derivatives. Endowed with this topology $\mathcal E\left(\mathbb R^{2n}, V\right)$ is a Fr\'echet--Schwartz space. Let $\mathcal D\left(\mathbb R^{2n}, V\right)$ be the space of compactly supported elements of $\mathcal E\left(\mathbb R^{2n}, V\right).$ For a compact subset
$K$ of $\mathbb R^{2n},$ let $\mathcal D_K\left(\mathbb R^{2n}, V\right)$ be the closed subspace of $\mathcal E\left(\mathbb R^{2n}, V\right)$ with support in $K$ endowed with the induced topology. Choose $\left\{K_n\right\}_{n\in\mathbb N}$ an exhausting sequence of compact subsets of $\mathbb R^{2n}.$ Then $\mathcal D\left(\mathbb R^{2n}, V\right)=\cup_{n=1}^\infty\mathcal D_{K_n}\left(\mathbb R^{2n}, V\right)$ \cite{Laurent}. We put on $\mathcal D\left(\mathbb R^{2n}, V\right)$ the strict inductive limit topology defined by the Fr\'echet--Schwartz spaces $\mathcal D_{K_n}\left(\mathbb R^{2n}, V\right).$ Denote by $\mathcal E'\left(\mathbb R^{2n}, V\right)$ the dual of $\mathcal E\left(\mathbb R^{2n}, V\right)$ and $\mathcal D'\left(\mathbb R^{2n}, V\right)$ the dual of $\mathcal D\left(\mathbb R^{2n}, V\right).$

Noting that  the dual of $V$ is $V$ itself as a finite dimensional complex vector space. The dual of the    complex $$0\rightarrow\mathcal D\left(\mathbb R^{2n},{\mathscr V}_0\right)\rightarrow\mathcal D\left(\mathbb R^{2n},{\mathscr V}_1\right)\rightarrow\mathcal D\left(\mathbb R^{2n},{\mathscr V}_2\right)\rightarrow\mathcal D\left(\mathbb R^{2n},{\mathscr V}_3\right)\rightarrow0,$$ is
{\small\begin{equation}\begin{aligned}\label{dco}
0\leftarrow\mathcal D'\left(\mathbb R^{2n},{\mathscr V}_0\right)\xleftarrow{\widehat{\mathscr{D}}_{0}}\mathcal D'\left(\mathbb R^{2n},{\mathscr V}_1\right)\xleftarrow{\widehat{\mathscr{D}}_{1}} \mathcal D'\left(\mathbb R^{2n},{\mathscr V}_2\right) \xleftarrow{\widehat{\mathscr{D}}_{2}} \mathcal D'\left(\mathbb R^{2n},{\mathscr V}_3\right)\leftarrow 0.
\end{aligned}\end{equation}}
The dual can be realization as follows. For $F\in\mathcal E\left(\mathbb R^{2n}, {\mathscr V}_j\right),$ we can  define  a functional on $\mathcal D\left(\mathbb R^{2n},\mathscr V_j\right)$ by
\begin{align*}
\langle F,\phi\rangle:=\int_{\mathbb R^{2n}}\langle F,\phi\rangle_{\mathscr V_j}\hbox{d}V,
\end{align*}
for $\phi\in\mathcal D\left(\mathbb R^{2n},\mathscr V_j\right).$ 
Then for $\mathbb F=\left(\begin{matrix}\mathbb F_1\\\mathbb F_2\end{matrix}\right)\in\mathcal E\left(\mathbb R^{2n}, {\mathscr V}_{1}\right)$ and $f\in\mathcal D\left(\mathbb R^{2n},\mathscr V_0\right),$ we have
\begin{equation*}
\begin{aligned}
\left\langle\widehat{\mathscr D}_0\mathbb F,f\right\rangle=&\left\langle\mathbb F,\mathscr D_0f\right\rangle=\int_{\mathbb R^{2n}}\left\langle\mathbb F,\mathscr D_0f\right\rangle_{\mathscr V_1}\hbox{d}V\\=&\sum_{A=0,1}\int_{\mathbb S^{+}} \left\langle \mathbb F_A,\nabla_Af\right\rangle_{\mathbb S^+}\hbox{d}V=\int_{\mathbb R^{2n}}\left\langle\sum_{A=0,1}\nabla_A\mathbb F_A,{f}\right\rangle_{\mathscr V_0}\hbox{d}V.
\end{aligned}\end{equation*}
So when acting on smooth elements $\widehat{\mathscr D}_0$ in (\ref{dco}) is a differential operator:
\begin{align}\label{hat}
\widehat{\mathscr D}_0\mathbb F=\sum_{A=0,1}\nabla_A\mathbb F_A.
\end{align}

\begin{prop}
Suppose  $K\Subset\mathbb R^{2n}.$  Then
\begin{align*}
\|\mathscr D_0f\|_{L^2}^2= \|f\|_{W^{1,2}}^2-\|f\|_{L^2}^2,
\end{align*}
for $f\in C_0^\infty\left(K,\mathscr V_0\right).$
\end{prop}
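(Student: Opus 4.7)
\medskip

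The statement is really an integration-by-parts identity on top of Proposition \ref{p23}. Unpacking the $W^{1,2}$ norm as $\|f\|_{W^{1,2}}^2=\|f\|_{L^2}^2+\sum_{A=0,1}\sum_{j=1}^n \|\partial_{Aj}f\|_{L^2}^2$, the claim is equivalent to
\begin{equation*}
\|\mathscr{D}_0 f\|_{L^2}^2 = \sum_{A=0,1}\sum_{j=1}^n \|\partial_{Aj}f\|_{L^2}^2.
\end{equation*}
So the plan is to move one copy of $\mathscr{D}_0$ to the other side via its formal adjoint, apply Proposition \ref{p23} to reduce to the Laplacian, and then integrate by parts once more.

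Concretely, I would first use the defining identity (\ref{Dstar}) for $\mathscr{D}_0^*$, which is legitimate here because $f\in C_0^\infty(K,\mathscr{V}_0)$ is compactly supported, to write
\begin{equation*}
\|\mathscr{D}_0 f\|_{L^2}^2 = (\mathscr{D}_0 f,\mathscr{D}_0 f)_1 = (f,\mathscr{D}_0^*\mathscr{D}_0 f)_0.
\end{equation*}
Next, I would invoke Proposition \ref{p23} to replace $\mathscr{D}_0^*\mathscr{D}_0 f$ by $\Delta f$, giving $\|\mathscr{D}_0 f\|_{L^2}^2 = (f,\Delta f)_0$.

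Finally, since $\Delta=\sum_{A}\nabla_A\nabla_A=-\sum_{A,j}\partial_{Aj}^2$ acts diagonally on the spinor components, a second integration by parts on $\mathbb{R}^{2n}$ (no boundary terms since $\operatorname{supp} f\subset K$) yields
\begin{equation*}
(f,\Delta f)_0 = -\sum_{A,j}\int_{\mathbb{R}^{2n}}\langle f,\partial_{Aj}^2 f\rangle_{\mathbb{S}^+}\,\mathrm{d}V = \sum_{A,j}\int_{\mathbb{R}^{2n}}\langle \partial_{Aj}f,\partial_{Aj}f\rangle_{\mathbb{S}^+}\,\mathrm{d}V = \sum_{A,j}\|\partial_{Aj}f\|_{L^2}^2,
\end{equation*}
and rearranging via the definition of the $W^{1,2}$ norm gives the asserted equality.

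There is no real obstacle: the only points requiring care are the sign convention in $\Delta_A=-\sum_j\partial_{Aj}^2$ so that $(f,\Delta f)_0$ comes out positive, and the use of the Hermitian pairing $\langle\cdot,\cdot\rangle_{\mathbb{S}^+}$ (which is sesquilinear but compatible with componentwise integration by parts since the real differential operators $\partial_{Aj}$ act trivially on the spinor indices). Both are routine; the estimate is essentially Proposition \ref{p23} restated as an equality of squared norms.
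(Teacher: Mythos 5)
Your argument is identical to the paper's: both pass $\mathscr D_0^*$ across, invoke Proposition \ref{p23} to reduce to $\Delta$, integrate by parts once more to obtain $\sum_{A,j}\|\partial_{Aj}f\|_{L^2}^2$, and conclude via the definition of the $W^{1,2}$-norm. No differences worth noting.
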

\begin{proof}
By (\ref{dsd}),
\begin{equation*}\begin{aligned}
\|\mathscr D_0f\|_{L^2}^2=&\int_{\mathbb R^{2n}}\left\langle\mathscr D_0 f,\mathscr D_0 f\right\rangle_{\mathscr V_1}{\rm d}V =\int_{\mathbb R^{2n}}\left\langle\mathscr D_0^*\mathscr D_0 f,f\right\rangle_{\mathbb S^+}{\rm d}V\\=&\int_{\mathbb R^{2n}}\left\langle\Delta f,f\right\rangle_{\mathbb S^+}{\rm d}V=\sum_{A,j}\left\|\frac{\partial f}{\partial x_{Aj}}\right\|_{L^2}^2= \|f\|_{W^{1,2}}^2-\|f\|_{L^2}^2,
\end{aligned}\end{equation*}
by the definition of $W^{1,2}$-norm. The proposition is proved.
\end{proof}
Then  we have the following estimate by the standard procedure  (cf. \cite{Nacinovich}).  
\begin{cor}
For any $K\Subset\mathbb R^{2n},$ there are constants $C_{s,K}>0,c_{s,K}\geq0$  such that
\begin{align}\label{esi}
C_{s,K}\|f\|_{W^{s,2}}^2+\left\|\mathscr D_0f\right\|_{W^{s,2}}^2\geq c_{s,K}\|f\|_{W^{s+1,2}}^2,
\end{align}
for any $f\in\mathcal E'\left(\mathbb R^{2n}, \mathscr V_0\right)$ with $\mathscr D_0f\in W^{s,2}\left(\mathbb R^{2n}, \mathscr V_0\right)$   and supp$f\subset K.$
\end{cor}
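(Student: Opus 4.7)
The preceding proposition, read as the identity
\[
\|f\|_{W^{1,2}}^2=\|f\|_{L^2}^2+\|\mathscr D_0 f\|_{L^2}^2
\qquad(f\in C_0^\infty(K,\mathscr V_0)),
\]
is already the case $s=0$ of (\ref{esi}) with $C_{0,K}=c_{0,K}=1$. The plan is to bootstrap to arbitrary $s$ and then extend from $C_0^\infty$ to $\mathcal E'$-distributions supported in $K$, using the key fact that $\mathscr D_0$ has constant coefficients, so it commutes with every partial derivative, with the Bessel potential $(1-\Delta)^{s/2}$, and with convolution.

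For an integer $s\geq 1$ and $f\in C_0^\infty(K,\mathscr V_0)$, each $\partial^\alpha f$ ($|\alpha|\leq s$) lies in $C_0^\infty(K,\mathscr V_0)$ with $\mathscr D_0\partial^\alpha f=\partial^\alpha\mathscr D_0 f$, so applying the base identity to $\partial^\alpha f$ and summing yields
\[
\sum_{|\alpha|\le s}\|\partial^\alpha f\|_{W^{1,2}}^2=\sum_{|\alpha|\le s}\|\partial^\alpha f\|_{L^2}^2+\sum_{|\alpha|\le s}\|\partial^\alpha \mathscr D_0 f\|_{L^2}^2,
\]
the right being controlled by $\|f\|_{W^{s,2}}^2+\|\mathscr D_0 f\|_{W^{s,2}}^2$ and the left dominating $\|f\|_{W^{s+1,2}}^2$ up to a combinatorial constant. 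Non-integer $s$ is reached by inserting $(1-\Delta)^{s/2}$ and running the same argument on the Fourier side, again reducing to the $s=0$ identity.

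To pass to $f\in\mathcal E'(\mathbb R^{2n},\mathscr V_0)$ with $\operatorname{supp}f\subset K$, I would use Friedrichs mollification. Fix a compact $K'$ with $K\Subset K'$ and a standard mollifier $\phi_\epsilon$; for small $\epsilon$, $f_\epsilon:=f*\phi_\epsilon\in C_0^\infty(K',\mathscr V_0)$ and $\mathscr D_0 f_\epsilon=(\mathscr D_0 f)*\phi_\epsilon$, so the smooth-case estimate applies on $K'$ to $f_\epsilon$ with constants independent of $\epsilon$. If $\|f\|_{W^{s,2}}=\infty$ the inequality is trivial; otherwise $f_\epsilon\to f$ and $\mathscr D_0 f_\epsilon\to\mathscr D_0 f$ in the relevant Sobolev spaces, the right-hand side of (\ref{esi}) stays uniformly bounded, and weak compactness in $W^{s+1,2}$ combined with lower semicontinuity of the norm delivers (\ref{esi}) for $f$ in the limit. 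The main obstacle I expect is precisely this limiting step: keeping the constants uniform as $\epsilon\to 0$ (automatic once the support is confined to the fixed $K'$) and identifying the weak $W^{s+1,2}$-limit of $\{f_\epsilon\}$ with $f$. Both are the routine ingredients of the ``standard procedure'' referenced as Nacinovich, so no genuinely new analytic input is required beyond the identity already supplied by the previous proposition.
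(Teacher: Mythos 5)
Your plan is correct and is precisely the ``standard procedure'' the paper defers to \cite{Nacinovich}: establish the $C_0^\infty$ estimate by commuting the constant-coefficient operator $\mathscr D_0$ with derivatives (or, more cleanly, working on the Fourier side where $\mathscr D_0^*\mathscr D_0=\Delta$ gives the pointwise symbol identity $\sum_A|\nu_A\hat f(\nu)|^2=|\nu|^2|\hat f(\nu)|^2$ and hence the exact Plancherel equality $\|f\|_{W^{s,2}}^2+\|\mathscr D_0 f\|_{W^{s,2}}^2=\|f\|_{W^{s+1,2}}^2$ for every real $s$), and then transfer it to $\mathcal E'$-distributions supported in $K$ by Friedrichs mollification, weak compactness in $W^{s+1,2}$, and lower semicontinuity of the norm, exactly as you outline. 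The one refinement worth making explicit is that for non-integer $s$ you should argue directly on the Fourier side rather than applying the integration-by-parts identity to $(1-\Delta)^{s/2}f$ (which is no longer compactly supported); since the basic identity is itself just Plancherel plus the symbol computation, this is presentational and does not affect correctness.
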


\begin{thm}\label{th73}
$\mathscr D_0 :\mathcal E'\left(\mathbb R^{2n},\mathscr V_0\right)\rightarrow \mathcal E'\left(\mathbb R^{2n},\mathscr V_1\right)$ and $\mathscr D_0 :\mathcal D\left(\mathbb R^{2n},\mathscr V_0\right)\rightarrow \mathcal D\left(\mathbb R^{2n}, \mathscr V_1\right)$ have closed ranges.
\end{thm}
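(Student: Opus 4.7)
The plan is to reduce closed range to a coercive estimate on each fixed compact support, derived from (\ref{esi}) by the standard Rellich-plus-injectivity argument, combined with a support-localization lemma that forces preimages to live in a controlled compact set. To upgrade (\ref{esi}) to
\begin{equation*}
\|f\|_{W^{s+1,2}}\leq C'_{s,K}\|\mathscr D_0 f\|_{W^{s,2}},\qquad \mathrm{supp}\,f\subset K,
\end{equation*}
I would argue by contradiction: a sequence $f_n$ supported in $K$ with $\|f_n\|_{W^{s+1,2}}=1$ and $\mathscr D_0 f_n\to 0$ in $W^{s,2}$ has, via Rellich compactness, a subsequential $W^{s,2}$-limit $f$ supported in $K$; applying (\ref{esi}) to $f_n$ itself shows that $\|f_n\|_{W^{s,2}}$ stays bounded below, so $f\neq 0$. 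But $\mathscr D_0 f=0$ together with Proposition \ref{p23} gives $\Delta f=\mathscr D_0^*\mathscr D_0 f=0$, and a compactly supported distribution killed by $\Delta$ must vanish (its Fourier transform is supported at the origin and annihilated by $|\xi|^2$), a contradiction.

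The support-localization lemma reads: for $K\Subset\mathbb R^{2n}$, let $\widehat K$ be the union of $K$ with the bounded components of $\mathbb R^{2n}\setminus K$; then any $T\in\mathcal E'$ with $\mathrm{supp}\,\mathscr D_0 T\subset K$ satisfies $\mathrm{supp}\,T\subset \widehat K$. Indeed, $\mathscr D_0 T=0$ off $K$ implies $\Delta T=0$ there, so by elliptic regularity $T$ is smooth, hence monogenic, hence real-analytic (Proposition \ref{p34}) off $K$, and a real-analytic piece with compact support must vanish on the unbounded component. With these two tools the $\mathcal D$-case is immediate: if $\mathscr D_0 f_n\to g$ in $\mathcal D$, strict inductive-limit convergence provides a common compact $K$ containing $\mathrm{supp}\,g$ and all $\mathrm{supp}\,\mathscr D_0 f_n$; support-localization confines $\mathrm{supp}\,f_n\subset\widehat K$; and the coercive estimate makes $\{f_n\}$ Cauchy in every $W^{s+1,2}_{\widehat K}$, hence in $\mathcal D_{\widehat K}$, with limit $f\in\mathcal D$ satisfying $\mathscr D_0 f=g$.

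For $\mathscr D_0:\mathcal E'\to\mathcal E'$, I would use the representation $\mathcal E'=\bigcup_{K,s}W^{-s,2}_K$. A convergent sequence $\mathscr D_0 T_n\to S$ in $\mathcal E'$ is, by Banach--Steinhaus on $\mathcal E$, uniformly bounded in some $W^{-s,2}_K$; support-localization again confines $\mathrm{supp}\,T_n\subset\widehat K$; dualizing the coercive estimate (via Hahn--Banach, since $\mathscr D_0$ embeds $W^{s+1,2}_{\widehat K}$ as a closed subspace of $W^{s,2}$, one obtains $\|T\|_{W^{-s-1,2}}\leq C\|\mathscr D_0 T\|_{W^{-s,2}}$ for $T\in W^{-s-1,2}_{\widehat K}$) bounds $\{T_n\}$ in a Hilbert space, whence weak compactness furnishes a limit $T\in\mathcal E'$ with $\mathscr D_0 T=S$. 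The main obstacle I expect is precisely this dualization step, namely verifying cleanly that the positive-order coercive estimate transfers to a negative-order Sobolev estimate on distributions supported in $\widehat K$, and that weak limits interact correctly with the topology on $\mathcal E'$.
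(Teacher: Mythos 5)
Your overall architecture mirrors the paper's: prove the coercive estimate (i.e.\ that $C_{s,K}=0$ may be taken in (\ref{esi})) by a Rellich-plus-uniqueness contradiction, localize supports using the fact that a compactly supported distribution monogenic off $K$ vanishes on the unbounded component of the complement, and then pass to a limit. Both the coercive-estimate argument (you normalize $\|f_n\|_{W^{s+1,2}}=1$ and invoke $\Delta f=0$ via Fourier support, the paper normalizes $\|h_\nu\|_{W^{s,2}}=1$ and invokes $\Delta^2 h_\infty=0$ via analytic continuation of biharmonic functions) and the support-localization lemma are essentially the paper's, only written out more explicitly.

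The genuine divergence is at the $\mathcal E'$-stage, and the dualization you flag as the main obstacle is in fact unnecessary. The Corollary giving (\ref{esi}) is stated for \emph{every} $s\in\mathbb R$ and for any $f\in\mathcal E'\left(\mathbb R^{2n},\mathscr V_0\right)$ with ${\rm supp}\, f\subset K$ and $\mathscr D_0 f\in W^{s,2}$, so it already applies at negative orders with no dual step. Given $\mathscr D_0 T_n\to S$ in $\mathcal E'$, one extracts (as the paper does, citing Schwartz) a common compact $K$ and a single $s\in\mathbb R$ with ${\rm supp}\,\mathscr D_0 T_n\subset K$ and $\mathscr D_0 T_n\to S$ in $W^{s,2}$, confines ${\rm supp}\,T_n\subset\widehat K$ by support localization, and applies (\ref{esi}) at that same $s$ to bound $\{T_n\}$ in $W^{s+1,2}$; weak compactness then produces the required preimage. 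This sidesteps the Hahn--Banach argument entirely, and it also exposes an index slip in your proposed dual inequality: the coercive estimate read at order $t=-s$ yields $\|T\|_{W^{-s+1,2}}\leq C\|\mathscr D_0 T\|_{W^{-s,2}}$ (gaining a derivative), not $\|T\|_{W^{-s-1,2}}\leq C\|\mathscr D_0 T\|_{W^{-s,2}}$ (losing one); the latter would still suffice for compactness but is not the clean transpose of the positive-order estimate and would need a separate justification. Finally, your direct $\mathcal D$-case argument (Cauchy in every $W^{s+1,2}_{\widehat K}$, hence in $\mathcal D_{\widehat K}$) is a valid alternative to the paper's route, which instead deduces closedness in $\mathcal D$ from the $\mathcal E'$-case by elliptic regularity.
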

\begin{proof}
Let $\{f_\nu\}$ be a sequence in $\mathcal E'\left(\mathbb R^{2n},\mathscr V_0\right)$ such that $\mathscr D_0f_\nu$ convergence in $\mathcal E'\left(\mathbb R^{2n},\mathscr V_0\right),$ i.e. all $\mathscr D_0f_\nu$ are supported in a fixed compact subset $K\Subset\mathbb R^{2n}$ and there is a $s\in\mathbb R$ such that $\mathscr D_0f_\nu\in W^{s,2}\left(\mathbb R^{2n},\mathscr V_0\right),$ supp$\left(\mathscr D_0f_\nu\right)\subset K$ for all $\nu$ and $\mathscr D_0f_\nu\rightarrow g$ in $W^{s,2}\left(\mathbb R^{2n},\mathscr V_1\right)$ \cite{Schwartz}.
Then $\mathscr D_0f_\nu=0$ outside of $K,$ i.e. $f_\nu$ is  monogenic on $\mathbb R^{2n}\setminus K.$  We can assume that $\mathbb R^{2n}\setminus K$  has no compact connected component. By Proposition \ref{PV} each component of a  monogenic function annihilated by $\Delta$ and so it is biharmonic on $\mathbb R^{2n}\setminus K$ and real analytic. Consequently, as compacted supported distributions, monogenic  functions $\left.f_\nu\right|_{\mathbb R^{2n}\setminus K}$  vanish on  $\mathbb R^{2n}\setminus K$ and thus $\left\{f_\nu\right\}$ are also supported in $K.$
This argument also implies that  (\ref{esi}) holds with $C_{s,K} = 0.$
If this is not true, there exist a sequence $h_\nu\in W^{s,2}\left(\mathbb R^{2n}, \mathscr V_0\right),$ with  supp$h_\nu\subset K,$ such that
\begin{align*}
\left\|\mathscr D_0h_\nu\right\|_{W^{s,2}}^2<\frac1\nu\left\|h_\nu\right\|_{W^{s+1,2}}^2.
\end{align*}
By rescaling we can assume that  $\left\|h_\nu\right\|_{W^{s,2}}= 1$ for each $\nu.$ By (\ref{esi})
\begin{align*}
C_{s,K}\geq\left(c_{s,K}-\frac1\nu\right)\left\|h_\nu\right\|_{W^{s+1,2}}^2.
\end{align*}
Thus $\left\{h_\nu\right\}$ is bounded in the Sobolev space $W^{s+1,2}\left(\mathbb R^{2n},\mathscr V_0\right).$ By the well known compactness
of the inclusion $W^{s+1,2}\left(\mathbb R^{2n},\mathscr V_0\right)\subset W^{s,2}\left(\mathbb R^{2n},\mathscr V_0\right),$ there is a subsequence that converges to a function $h_\infty\in W^{s,2}\left(\mathbb R^{2n},\mathscr V_0\right).$  We have $\left\|h_\infty\right\|_{W^{s,2}}=1,\ \mathscr D_0h_\infty\equiv0.$
Then $\Delta^2 h_\infty=0$ and $h_\infty$ is also compactly supported in $K$. So $h_\infty=0$ by analytic continuation, which contradicts to $\left\|h_\infty\right\|_{W^{s,2}}^2=1.$

By the estimate (\ref{esi})   with $C_{s,K} = 0,$ we see that $\left\{f_\nu\right\}$ is uniformly
bounded in $W^{s+1,2}\left(\mathbb R^{2n},\mathscr V_0\right)$, and hence contains a subsequence which converges to a compactly supported weak solution $f\in W^{s,2}\left(\mathbb R^{2n},\mathscr V_0\right)$  of $\mathscr D_0f=g.$ Namely, the image of $\mathscr D_0$
in $\mathcal E'\left(\mathbb R^{2n},\mathscr V_0\right)$ is closed. The closedness of the image of $\mathscr D_0$ in $\mathcal D\left(\mathbb R^{2n},\mathscr V_0\right) $ follows from the proved result for $\mathcal E'\left(\mathbb R^{2n},\mathscr V_0\right)$ and the elliptic regularity.
\end{proof}
Then we can prove the   generalization of Malgrange's vanishing Theorem \ref{vanish}. See \cite{Brinkschulte,Laurent,Nacinovich} for Malgrange's vanishing theorem  on CR manifolds and    \cite[Theorem 1.3]{wang29} for this theorem for tangential $k$-Cauchy--Fueter operator over right group, respectively.

{\it Proof of Theorem \ref{vanish}.}
By Theorem \ref{th73} and its proof, the sequences
\begin{equation}\begin{aligned}\label{aco1}
&0\rightarrow\mathcal D\left(\mathbb R^{2n},\mathscr V_0\right)\xrightarrow{{\mathscr{D}}_{0}} \mathcal D\left(\mathbb R^{2n},  \mathscr V_1\right),\\&0\rightarrow\mathcal E'\left(\mathbb R^{2n}, \mathscr V_0\right)\xrightarrow{{\mathscr{D}}_{0}} \mathcal E'\left(\mathbb R^{2n}, \mathscr V_1\right),
\end{aligned}\end{equation}
are both exact and have closed ranges. Thus $\mathscr D_0$'s in (\ref{aco1}) are topological homomorphisms. We can apply abstract duality theorem \ref{dual} (6) to sequences in (\ref{aco1}) to get exact sequences
\begin{equation}\begin{aligned}\label{aco2}
&0\leftarrow\mathcal D'\left(\mathbb R^{2n},  {\mathscr V}_0\right)\xleftarrow{\widehat{\mathscr{D}}_{0}} \mathcal D'\left(\mathbb R^{2n}, {\mathscr V}_1\right),\\&0\leftarrow\mathcal E\left(\mathbb R^{2n},  {\mathscr V}_0\right)\xleftarrow{\widehat{\mathscr{D}}_{0}} \mathcal E\left(\mathbb R^{2n}, {\mathscr V}_1\right),
\end{aligned}\end{equation}
i.e. $\widehat{\mathscr D}_0$'s are surjective, since $\ker\mathscr D_0=\{0\}$ in (\ref{aco1}).
Compare the formula  (\ref{hat}) of $\widehat{\mathscr D}_0$ and the definition of $\mathscr D_2$ in (\ref{D2}), $\widehat{\mathscr D}_0:\mathcal E\left(\mathbb R^{2n}, {\mathscr V}_1\right)\rightarrow \mathcal E\left(\mathbb R^{2n}, {\mathscr V}_0\right)$ can be identified with ${\mathscr D}_2:\mathcal E\left(\mathbb R^{2n}, {\mathscr V}_2\right)\rightarrow \mathcal E\left(\mathbb R^{2n}, {\mathscr V}_3\right)$ if we take linear isomorphism    $\mathscr V_2\rightarrow \mathscr V_1$ given by $\left(\begin{matrix}  h_0\\h_1\end{matrix}\right)\rightarrow\left(\begin{matrix}  h_1\\-h_0\end{matrix}\right)$ and identify $\mathbb S^+$ with $\mathbb S^-$ as complex vector spaces, although they are different as representations of  ${\rm Spin}(n)$.
Then we have the exact sequences
\begin{equation*}\begin{aligned}
\mathcal E\left(\mathbb R^{2n}, \mathscr V_2\right)\xrightarrow{{\mathscr{D}}_{0}} \mathcal E\left(\mathbb R^{2n}, \mathscr V_3\right)\rightarrow 0.
\end{aligned}\end{equation*} Similarly, $\mathcal D'\left(\mathbb R^{2n},\mathscr V_2\right)\xrightarrow{{\mathscr{D}}_{0}} \mathcal D'\left(\mathbb R^{2n},  \mathscr V_3\right)\rightarrow0.$
 The theorem is proved.
\qed
\begin{rem}\label{remh}The exactness of (\ref{aco1}) is equivalent to $H^0\left(\mathcal D\left(\mathbb R^{2n},{\mathscr V}_\bullet\right)\right)=$ $H^0\left(\mathcal E'\left(\mathbb R^{2n},{\mathscr V}_\bullet\right)\right)=\{0\},$ while Theorem \ref{vanish}
 is equivalent to  $H_0\left(\mathcal D'\left(\mathbb R^{2n},{\mathscr V}_\bullet\right)\right)$ $=H_0\left(\mathcal E\left(\mathbb R^{2n},{\mathscr V}_\bullet\right)\right)=\{0\}.$
\end{rem}


Now we can prove the  Hartogs--Bochner extension Theorem \ref{text} for  monogenic functions.

{\it Proof of Theorem \ref{text}.}
Since $\mathscr D_0f$ vanishes to the second order on $\partial \Omega,$  we can  extend  $\mathscr D_0 f$ by $0$ outside of $\overline\Omega$ to get a $\mathscr D_1$-closed $C^2$ element $F\in\mathcal E'\left(\mathbb R^{2n},\mathscr V_1\right)$ supported in $\overline \Omega.$ Since $H^0\left(\mathcal E'\left(\mathbb R^{2n}, {\mathscr V}_\bullet\right)\right)$ vanish by Remark \ref{remh},  it is separated. Thus, we can apply abstract duality theorem  \ref{dual} (1) to the second sequences in (\ref{aco1}) and (\ref{aco2}) to see that
\begin{align}\label{imd}
{\rm Im}\ \mathscr D_0=\left\{\tilde F\in\mathcal E'\left(\mathbb R^{2n},\mathscr V_1\right)|\left\langle \tilde F,G\right\rangle=0\ {\rm for\ any}\ G\in\ker \widehat{\mathscr D}_0 \right\}.
\end{align}
Consequently, we have $F\in{\rm Im}\ \mathscr D_0.$ This is  because $\widehat{\mathscr D}_0=\mathscr D_0^*$ on $\mathcal E\left(\mathbb R^{2n},\mathscr V_1\right)$ and so for any $G\in\ker  {\mathscr D}^*_0\subset\mathcal E\left(\mathbb R^{2n}, {\mathscr V}_1\right),$ we have
\begin{equation}\begin{aligned}\label{47}
\langle F,G\rangle=&\int_\Omega\left\langle\mathscr D_0f,G\right\rangle_{\mathscr V_1}{\rm d}V=\sum_{A=0,1}\int_\Omega\left\langle\nabla_Af, G_A\right\rangle_{\mathbb S^-}{\rm d}V\\=&\sum_{A=0,1}\sum_{j}\int_\Omega\left\langle  \gamma_j\partial_{Aj}f, G_A\right\rangle_{\mathbb S^-}{\rm d}V\\=&-\sum_{A=0,1}\sum_{j}\int_\Omega\left\langle\partial_{Aj}f, \gamma_j G_A\right\rangle_{\mathbb S^+}{\rm d}V\\=&\sum_{A=0,1}\int_\Omega \left\langle f,\nabla_AG_A\right\rangle_{\mathbb S^+}{\rm d}V-\sum_{A=0,1}\sum_j\int_{\partial \Omega}\left\langle f,\gamma_jG_A\right\rangle_{\mathbb S^+}n_{Aj} {{\rm d}S} \\=&\left\langle f, {\mathscr D}^*_0G\right\rangle-\sum_{A=0,1}\sum_j\int_{\partial \Omega}\left\langle f,\gamma_jG_A\right\rangle_{\mathbb S^+}n_{Aj} {{\rm d}S}=0,
\end{aligned}\end{equation}
by using (\ref{hat}) and Stokes-type formula, where $n$ is the unit outer normal vector   to $\partial\Omega.$ Hence, by (\ref{imd}) there exists a distribution $H\in\mathcal E'\left(\mathbb R^{2n},\mathscr V_0\right)$   such that $F =\mathscr D_0H.$

Recall that a distribution in $\mathcal E'$ always has compact support. Now by estimate  (\ref{esi}) with $C_{s,K}=0$ and $s=0,$ proved in the proof of Theorem \ref{th73}, $H\in W^{1,2}\left(\mathbb R^{2n},\mathscr V_0\right).$   Then $H$ is  monogenic on the connected open set $\mathbb R^{2n}\setminus\overline \Omega,$ since supp $F\in\overline\Omega.$ By real analyticity of monogenic  functions, $H$ vanishes on $\mathbb R^{2n}\setminus\overline \Omega.$ Hence, $\tilde f = f-H$ gives us the required extension.
\qed
\begin{rem}\label{r4.7}
The conditions   $\mathscr D_0 f$ vanishing to the second order on $\partial \Omega$ and  the moment condition (\ref{moment}) are also  necessary for such extension in Theorem \ref{text}.
It is clear that if $f$ is monogenic, for any   $G\in\ker {\mathscr D}^*_0\subset\mathcal E\left(\mathbb R^{2n}, {\mathscr V}_1\right),$
\begin{equation*}\begin{aligned}
\sum_{A=0,1}\sum_j\int_{\partial \Omega}\left\langle f,\gamma_jG_A\right\rangle_{\mathbb S^+}n_{Aj} {{\rm d}S}=
\left\langle f, {\mathscr D}^*_0G\right\rangle-\left\langle\mathscr D_0 f,G\right\rangle=0,
\end{aligned}\end{equation*}as in (\ref{47}).
\end{rem}

\subsection*{Acknowledgment}
The first author is  partially supported by  Nature Science Foundation of Zhejiang province (No. LY22A010013) and  National Nature Science Foundation in China (Nos. 11801508, 11971425); The second author is partially supported by National Nature Science Foundation in China  (No. 11971425); The third author is partially supported by National Nature Science Foundation in China  (No. 12071197), the Natural Science
Foundation of Shandong Province (Nos. ZR2019YQ04, 2020KJI002).

\end{document}